%% file: main.tex
\documentclass{article}

\PassOptionsToPackage{numbers, compress}{natbib}

\input{preamble}

\title{Finding Simple Proofs for First-Order Optimization}
\author{%
  Daniel Berg Thomsen\thanks{corresponding author:
  \href{mailto:daniel.berg-thomsen@inria.fr}{daniel.berg-thomsen@inria.fr}}
  ~ \thanks{INRIA \& D.I. {\'E}cole Normale Sup{\'e}rieure, CNRS \& PSL
  Research University, Paris, France.} ~ \thanks{CMAP, {\'E}cole
  Polytechnique, Institut Polytechnique de Paris, Paris, France.} \\
  \and
  Manu Upadhyaya\footnotemark[2] ~ \footnotemark[3] \\
  \and
  Baptiste Goujaud\thanks{SAMOVAR, T{\'e}l{\'e}com SudParis, Institut
  Polytechnique de Paris, Palaiseau, France.} \\
  \and
  Aymeric Dieuleveut\footnotemark[3] \\
  \and
  Adrien Taylor\footnotemark[2]
}

\begin{document}

\maketitle

\input{sections/abstract}

\input{sections/introduction}

\input{sections/simple_proof_structures}

\input{sections/heuristic_search}

\input{sections/experimental_examples}

\input{sections/proximal_certificates}

\input{sections/conclusion}

\begin{ack}
  D.~Berg Thomsen, M.~Upadhyaya, and A.~Taylor are supported by the European
  Union (ERC grant
  CASPER 101162889). The work of A.~Dieuleveut is partly supported by
  ANR-19-CHIA-0002-01/chaire SCAI, Hi!Paris FLAG project, and PEPR Redeem.
  B.~Goujaud is supported by a Hi!Paris chair. The
  French government also partly funded this work under the management of the
  Agence Nationale de la Recherche as part of the France 2030
  program, references
  ANR-23-IACL-0008 ``PR[AI]RIE-PSAI'', ANR-23-PEIA-005 (REDEEM project), and
  ANR-23-IACL-0005.
\end{ack}

\bibliographystyle{abbrvnat}
\bibliography{bib}

\newpage

\appendix

\appendixtableofcontents

\input{sections/appendix}

\end{document}

%% file: preamble.tex
\usepackage[preprint]{neurips_2026}
\usepackage[utf8]{inputenc}
\usepackage[T1]{fontenc}
\usepackage{hyperref}
\usepackage{xurl}
\usepackage{booktabs}
\usepackage{amsmath,amssymb,amsfonts,amsthm,mathtools}
\usepackage{thmtools}
\usepackage{stmaryrd}
\SetSymbolFont{stmry}{bold}{U}{stmry}{m}{n}
\usepackage{algorithm}
\usepackage{algpseudocodex}
\usepackage{graphicx}
\graphicspath{{figures/}}
\usepackage{nicefrac}
\usepackage{microtype}
\usepackage{xcolor}
\usepackage{bm}
\usepackage{enumitem}
\usepackage[nameinlink,noabbrev]{cleveref}
\usepackage{cancel}

\hypersetup{hypertexnames=false, colorlinks=true, linkcolor=blue, citecolor=blue, urlcolor=blue}
\providecommand{\doi}[1]{}
\renewcommand{\doi}[1]{\href{https://doi.org/#1}{doi: #1}}
\input{commands}

\newtheorem{theorem}{Theorem}
\newtheorem{lemma}{Lemma}

\theoremstyle{definition}
\newtheorem{assumption}{Assumption}

\theoremstyle{remark}
\newtheorem{remark}{Remark}

\AddToHook{env/theorem/begin}{\crefalias{section}{theorem}}
\AddToHook{env/lemma/begin}{\crefalias{section}{lemma}}
\AddToHook{env/corollary/begin}{\crefalias{section}{corollary}}
\AddToHook{env/example/begin}{\crefalias{section}{example}}
\AddToHook{env/fact/begin}{\crefalias{section}{fact}}
\AddToHook{env/assumption/begin}{\crefalias{section}{assumption}}
\AddToHook{env/definition/begin}{\crefalias{section}{definition}}
\AddToHook{env/remark/begin}{\crefalias{section}{remark}}

\crefname{assumption}{assumption}{assumptions}
\Crefname{assumption}{Assumption}{Assumptions}
\crefname{appendix}{appendix}{appendices}
\Crefname{appendix}{Appendix}{Appendices}
\crefname{subappendix}{appendix}{appendices}
\Crefname{subappendix}{Appendix}{Appendices}
\AddToHook{cmd/appendix/before}{%
    \crefalias{section}{appendix}%
    \crefalias{subsection}{subappendix}%
}

\makeatletter
\newif\ifpaper@appendix
\AddToHook{cmd/appendix/after}{\paper@appendixtrue}

\let\paper@orig@section\section
\let\paper@orig@subsection\subsection

\RenewDocumentCommand{\section}{s o m}{%
    \IfBooleanTF{#1}{%
        \paper@orig@section*{#3}%
    }{%
        \ifpaper@appendix
            \clearpage
        \fi
        \IfNoValueTF{#2}{%
            \paper@orig@section{#3}%
        }{%
            \paper@orig@section[#2]{#3}%
        }%
        \ifpaper@appendix
            \IfNoValueTF{#2}{%
                \addcontentsline{atoc}{section}{\protect\numberline{\thesection}#3}%
            }{%
                \addcontentsline{atoc}{section}{\protect\numberline{\thesection}#2}%
            }%
        \fi
    }%
}

\RenewDocumentCommand{\subsection}{s o m}{%
    \IfBooleanTF{#1}{%
        \paper@orig@subsection*{#3}%
    }{%
        \IfNoValueTF{#2}{%
            \paper@orig@subsection{#3}%
        }{%
            \paper@orig@subsection[#2]{#3}%
        }%
        \ifpaper@appendix
            \IfNoValueTF{#2}{%
                \addcontentsline{atoc}{subsection}{\protect\numberline{\thesubsection}#3}%
            }{%
                \addcontentsline{atoc}{subsection}{\protect\numberline{\thesubsection}#2}%
            }%
        \fi
    }%
}

\newcommand{\appendixtableofcontents}{%
    \section*{Appendices}%
    \begingroup
        \hypersetup{linkcolor=black}%
        \setcounter{tocdepth}{2}%
        \noindent\textbf{Table of Contents}\par
        \vspace{0.25\baselineskip}%
        \hrule
        \vspace{0.35\baselineskip}%
        \@starttoc{atoc}%
    \endgroup
    \clearpage
}
\makeatother

%% file: commands.tex
\newcommand\Bigset[1]{\mathord{\left\{ #1 \right\}}}

\newcommand\Bigp[1]{\mathord{\left( #1 \right)}}
\newcommand{\abs}[1]{\left\lvert#1\right\rvert}
\newcommand{\Bignorm}[1]{\left\lVert#1\right\rVert}

\newcommand{\inner}[2]{\langle #1, #2 \rangle}

\newcommand{\A}{\mathcal{A}}
\newcommand{\KK}{\mathcal{K}}

\newcommand{\F}{\mathcal{F}}
\newcommand{\Pc}{\mathcal{P}}
\newcommand{\Ic}{\mathcal{I}}
\newcommand{\R}{\mathbb{R}}

\newcommand{\HH}{\mathcal{H}}

\newcommand{\labels}{\mathcal{S}}
\newcommand{\hor}{N}

\DeclareMathOperator*{\minimize}{minimize}
\DeclareMathOperator*{\argmin}{argmin}

\DeclareMathOperator{\tr}{Tr}

\renewcommand{\leq}{\leqslant}
\renewcommand{\geq}{\geqslant}
\renewcommand{\succeq}{\succcurlyeq}

\newlength{\problemheadwidth}
\newlength{\problemsubjecttowidth}
\newcommand{\problemhead}[2]{%
    \settowidth{\problemheadwidth}{$\displaystyle\underset{#2}{\text{#1}}$}%
    \settowidth{\problemsubjecttowidth}{$\displaystyle\text{subject to}$}%
    \ifdim\problemheadwidth<\problemsubjecttowidth
        \setlength{\problemheadwidth}{\problemsubjecttowidth}%
    \fi
    \global\problemheadwidth=\problemheadwidth
    \mathmakebox[\problemheadwidth][c]{\underset{#2}{\text{#1}}}\quad
}
\newcommand{\problemfind}{%
    \settowidth{\problemheadwidth}{$\displaystyle\text{find}$}%
    \settowidth{\problemsubjecttowidth}{$\displaystyle\text{subject to}$}%
    \ifdim\problemheadwidth<\problemsubjecttowidth
        \setlength{\problemheadwidth}{\problemsubjecttowidth}%
    \fi
    \global\problemheadwidth=\problemheadwidth
    \mathmakebox[\problemheadwidth][c]{\text{find}}\quad
}
\newcommand{\problemsubjectto}{%
    \mathmakebox[\problemheadwidth][c]{\text{subject to}}\quad
}

%% file: sections/abstract.tex
\begin{abstract}
Progress in mathematics often requires more than a certificate of truth: it
requires proof structures that are transparent, checkable, and reusable.
Automated systems can increasingly certify that a result is true; what they
typically return, however, is a dense certificate rather than an interpretable,
reusable proof structure.
    
Recent work on performance estimation problems has shown that performance bounds and complexity analyses of first-order optimization methods can be discovered by searching over a structured space of Lagrangian dual certificates. We cast the search for simpler proof structures as a second-stage optimization problem over these certificates. Starting from dual certificates, we develop post-processing procedures using tools from sparse optimization and statistical learning. We measure complexity through features such as active hypotheses and residual structure, and introduce methods based on exhaustive sparsification, weighted $\ell_1$-type heuristics, and semidefinite programming (SDP) formulations for discovering simple proofs and intermediate lemmas.

Examples on gradient descent, proximal methods, and fast-gradient methods show that these procedures can autonomously prune redundant inequalities, reveal structured proof patterns, and, in the proximal setting, recover Lyapunov functions as intermediate lemmas that lead to simple, streamlined proofs. By distilling dense machine-generated certificates into compact proof structures, this workflow acts as a pre-processing step for the final proof, reducing the complexity that must be managed during human interpretation, reuse, and formalization.
\end{abstract}

%% file: sections/introduction.tex
\section{Introduction}\label{sec:introduction}

Proof simplification is a central part of mathematical practice. Once a result
has been proved, a simpler proof can change what the result offers: it can make arguments easier to verify, reveal mechanisms behind the statement, and
turn an isolated derivation into a building block for extensions. This role becomes
especially important in computer-aided mathematics, where proofs and
certificates may be produced, manipulated, or checked by numerical solvers,
computer algebra systems, formal proof assistants such as Lean, and large
language models. In
such settings, obtaining a valid certificate is often not enough: one also wants
proof structures that are sparse, modular, and reusable.

In the context of performance bounds and complexity analyses of (first-order) optimization algorithms, performance estimation problems (PEPs) provide a concrete framework for formulating such simplification tasks precisely. PEPs have made it possible to certify tight
worst-case bounds for first-order optimization methods by solving numerical
optimization problems~\cite{drori2014performance,taylor2017smooth}. In
standard semidefinite programming (SDP) formulations of interpolation-based PEPs, Lagrangian dual solutions are algebraic certificates
of the corresponding worst-case performance bounds,
and hence machine-searchable proof objects. The certificates returned by
numerical solvers, however, are typically parameter-specific, dense, and far from
unique: even after the
method, function class, PEP formulation, and target bound have been fixed, many
different certificates may prove the same inequality. In this form, a certificate
may establish the rate without revealing how its inequalities, residual terms,
and intermediate statements can be organized into a simple proof.

This work asks whether the same machinery that certifies a worst-case
guarantee can also help identify a simple proof of it. We treat simplification
as a second-stage optimization problem over Lagrangian dual certificates~\citep{goujaud2023fundamental}. Here, we define
\emph{simplicity} operationally: a proof may use fewer active hypotheses,
organize the remaining nonnegative residual terms more transparently, expose
recognizable multiplier patterns, or isolate an intermediate lemma that turns a
dense identity into a reusable proof step. We turn this formulation into
certificate-simplification procedures based on exhaustive sparsification,
weighted $\ell_1$-type heuristics, and semidefinite programs for finding
candidate intermediate lemmas. Rather than prescribing a single proof template,
the workflow exposes proof patterns and candidate lemmas by searching over
certificate representations; after a bound has been certified, it distills the
certificate into proof ingredients suitable for human interpretation, reuse, and
translation into symbolic/formal proof systems~\citep[e.g.,][]{naldi2025solving}.

Concretely, we consider first-order methods for problems of the form
\begin{equation*}
    \minimize_{x \in \R^d}\; f\Bigp{x},
\end{equation*}
where $f:\R^d\to\R$ belongs to a function class $\F$; the same framework can also accommodate constraints and composite terms. Let
$\mathcal{T}_{\A}\Bigp{f,\hor,x_0}$ denote the set of all possible first $\hor+1$ iterates generated by a method $\mathcal{A}$ (e.g., gradient descent) applied on a function $f\in \F$, and initialized at~$x_0$.
Moreover, let $\Pc_f$ denote the performance measure of interest (e.g., $\|x_\hor-x_\star\|^2$), and $\Ic_f$ denote the initialization measure (e.g., $\|x_0-x_\star\|^2$). 
The associated worst-case problem corresponds to finding a dimension $d$, a function $f$ with optimal point $x_\star$, an initialization $x_0$, and a trajectory $(x_k)_{k\in\llbracket 0, N\rrbracket}$, maximizing the effective rate $\rho$ for those measures, i.e.,
\begin{equation*}
    \begin{aligned}
    \problemhead{maximize}{\substack{\rho\in\R, f \in \F,\ d \geq 1,\ x_0\in\R^d \\
        x_\star \in \argmin_{x \in \R^d} f\Bigp{x} \\
        \Bigp{x_k}_{k\in\llbracket 0,\hor \rrbracket} \in \mathcal{T}_{\A}\Bigp{f,\hor,x_0}}}
        &
        \rho \\
    \problemsubjectto
        & \Pc_f\Bigp{\Bigp{x_k}_{k\in\llbracket 0,\hor \rrbracket}, x_\star} = \rho \Ic_f\Bigp{\Bigp{x_k}_{k\in\llbracket 0,\hor \rrbracket}, x_\star}.
    \end{aligned}
    \tag{PEP}
    \label{eq:intro-pep}
\end{equation*}
This problem is infinite-dimensional because it optimizes over the function $f$
itself, not only over a finite trajectory. When an exact finite-dimensional SDP
reformulation is available~\citep{taylor2017smooth,taylor2017exact},
interpolation theory replaces the function variable
and trajectory admissibility conditions by finitely many sampled oracle values,
method constraints, and auxiliary variables; we then work with the resulting
finite problem and its Lagrangian dual certificates.

For an exact SDP reformulation, any dual feasible point with objective value at
most the target bound provides a numerical certificate of that bound. These
certificates are typically not unique:
the same guarantee can be witnessed by different choices of Lagrangian dual
multipliers and slack variables. Thus, once the PEP formulation and target
guarantee are fixed, simplification can be viewed as a second-stage search for
an alternative certificate that still proves the chosen bound.
Such simplified certificates can turn dense numerical multipliers into explicit
patterns and, in favorable cases, reveal proof templates that can be adapted to
related function classes or algorithms.

\paragraph{Contributions.}
\textbf{(i)} Building on standard SDP formulations for interpolation-based PEPs,
we introduce certificate-complexity criteria that count active inequalities and
active residual terms in a fixed proof representation. \textbf{(ii)} We develop exact and
heuristic sparsification procedures, from exhaustive search in small instances
to weighted $\ell_1$-type surrogates for larger ones. \textbf{(iii)} We propose
an SDP search for deriving valid inequalities from those already available in a
PEP formulation, thereby generating candidate intermediate lemmas.
\textbf{(iv)} On gradient-descent and fast-gradient examples, the workflow
recovers weaker fitted interpolation inequalities, a three-hypothesis GD proof,
and compact FGM multiplier patterns. \textbf{(v)} On proximal methods, the same
candidate-lemma workflow recovers compact Lyapunov proofs for the proximal point
residual bound and accelerated proximal point saddle-gap estimate.

\paragraph{Related work and scope.}
The PEP framework was introduced by \cite{drori2014performance} and formalized
by \cite{taylorconvex,taylor2017exact}. Its Lagrangian dual yields certificates
for worst-case performance bounds, and has led to numerous tight
bounds \citep[e.g.,][]{abbaszadehpeivasti2024rate, barre2020principled,
bergthomsen2025tight, de2017worst, dragomir2022optimal, gorbunov2022last,
goujaud2022optimal, rotaru2024exact, taylor2019stochastic}. Related worst-case
and PEP-inspired analyses now also cover operator-splitting and fixed-point
iterations~\citep{park2022exact,ryu2020operator,yoon2024optimal} as well as
min--max algorithms~\citep{shugart2025negative}.

More broadly, PEPs provide a rigorous framework for characterizing the proof
structures inherent to first-order optimization in classical setups
\citep{goujaud2023fundamental}. This has enabled the discovery of proofs that
are difficult to obtain by traditional analysis \citep{kim2016optimized,
kim2018another, kim2018generalizing, lieder2021convergence, ryu2020operator,
upadhyaya2024automated}, as well as the systematic design of new algorithms
\citep{altschuler2025acceleration,drori2020efficient, jang2025computer,
kim2016optimized,taylor2023optimal,upadhyaya2026optimal}. These advancements
are supported by dedicated software tools~\citep{goujaud2024pepit,
taylor2017performance,upadhyaya2025autolyap}.

However, such certificates are rarely unique. The resulting proofs are often
highly complex, and understanding, replicating, or adapting them to new
algorithms often requires substantial effort. This paper demonstrates
that tools from sparse optimization and statistical learning can be used to
make the passage from certificates to proofs more systematic: they provide a way to search for
\emph{simpler} certificates that still recover the desired performance
guarantees.

\paragraph{Organization.}
The rest of the paper is organized as follows.  First, in \Cref{sec:simple-proof-structures}, we formalize proof
structures and define the proof-complexity measures used to compare them. We
then describe search procedures for sparsifying certificates and generating
candidate lemmas in \Cref{sec:heuristic-search}. The experimental examples, given in \Cref{sec:experimental-examples}, show how candidate lemmas recover
fitted interpolation inequalities for gradient descent, while sparsification
exposes compact multiplier patterns for fast-gradient methods. We then show in \Cref{sec:proximal-certificates} that the 
same candidate-lemma search technique
recovers Lyapunov functions for proximal algorithms, yielding sharp
proximal point residual/value bounds and an accelerated proximal point
saddle-gap estimate. The appendices collect the supporting details:
\Cref{app:general-pep-framework} introduces the PEP setting in which these proof
structures arise; \Cref{app:candidate-lemma-sdp-formulation} derives the
candidate-lemma SDP; \Cref{app:example-peps} describes the numerical PEP
searches behind the experimental examples; and
\Cref{app:proximal-certificate-proofs} gives the closed-form proximal
certificate proofs.

\paragraph{Notation.} We write $a\triangleq b$ when $a$ is defined as $b$.
For integers $n,m\in\mathbb{Z}$, write
$\llbracket n,m \rrbracket=\Bigset{i\in\mathbb Z:n\leq i\leq m}$.
For any finite set $S$, $\abs{S}$ denotes its cardinality. We use
$\inner{\cdot}{\cdot}$ for the ambient inner product and $\Bignorm{\cdot}$ for
its induced norm; for symmetric positive-definite $B$, write
$\Bignorm{x}_B^2\triangleq\inner{Bx}{x}$. The trace is denoted by $\tr$.
Let $\mathbb S^r$ be the space of $r\times r$ real symmetric matrices and
$\mathbb S_+^r$ its positive semidefinite cone; $A\succeq0$ means
$A\in\mathbb S_+^r$. We write $\R_+^p$ for the nonnegative orthant. When
$x_\star\in\argmin_{x} f\Bigp{x}$, write $f_\star=f\Bigp{x_\star}$. For convex
functions, $\partial f$ denotes the subdifferential; for concave scalar
penalties, $\partial\phi$ denotes the superdifferential.

%% file: sections/simple_proof_structures.tex
\section{Simple proof structures}\label{sec:simple-proof-structures}

This section fixes the proof representation used throughout the paper. We view a
convergence proof as a decomposition of the target guarantee into nonnegative
valid inequalities and residual terms; this representation makes proof
simplicity measurable through its active inequalities and residual structure.

\subsection{Proof structures}\label{sec:proof-structures}
As described in \eqref{eq:intro-pep}, a convergence proof typically certifies, for each $\hor\in \mathbb{N}$,
a bound of the form
\begin{equation}\label{eq:target}
    \Pc_f\Bigp{\Bigp{x_k}_{k\in\llbracket0,\hor\rrbracket},x_\star}
    \leq \rho(\hor)\,\, \Ic_f\Bigp{\Bigp{x_k}_{k\in\llbracket0,\hor\rrbracket},x_\star},
\end{equation}
uniformly over all problem dimensions, functions $f\in\F$, minimizers
$x_\star$, initial points $x_0$, and trajectories $(x_k)_{k\in\llbracket0,\hor\rrbracket}$ generated by the method $\mathcal{A}$, where
$\Pc_f$ is the chosen performance measure, $\Ic_f$ is an initialization budget, and
$\rho(\hor)$ is the certified rate at iteration $\hor$. The \textit{performance estimation} literature (see e.g., \cite{goujaud2023fundamental}) has studied the structure of first-order convergence
proofs. We let $\labels$ denote the finite set of point labels made available
to the proof; for example, one may take
$\labels=\Bigset{\star,0,\ldots,\hor}$. For $i,j\in\labels$, let $\KK_{i,j}$
be the finite set indexing the scalar hypotheses attached to the ordered pair
$(i,j)$. A key takeaway is that many first-order convergence proofs can be
written as
\begin{equation}
    \rho(\hor) \Ic_f\Bigp{\Bigp{x_k}_{k\in\llbracket0,\hor\rrbracket},x_\star}
    - \Pc_f\Bigp{\Bigp{x_k}_{k\in\llbracket0,\hor\rrbracket},x_\star}
    =\sum_{i,j\in\labels}\sum_{\ell\in\KK_{i,j}}
    \lambda^{\Bigp{\ell}}_{i,j} H^{\Bigp{\ell}}\Bigp{x_i,x_j}
    + \sum_{i=1}^{r} c_i R_i,
    \label{eq:generic-proof} \tag{Proof}
\end{equation}
where $r\in\mathbb N$, the Lagrangian multipliers
$\lambda^{\Bigp{\ell}}_{i,j}$, and the coefficients $c_i$ are all
non-negative, with:
\begin{enumerate}[noitemsep,topsep=0pt]
    \item For each $i,j\in\labels$ and $\ell\in\KK_{i,j}$, the term
    $H^{\Bigp{\ell}}\Bigp{x_i,x_j}$ is an expression provided by the
    definition of the function class and algorithm, and is enforced to be
    non-negative. For the first-order function classes considered here, the
    hypothesis list typically consists of the pairwise interpolation
    inequalities~\cite{taylor2017smooth}, possibly supplemented by additional
    redundant valid inequalities.
    \item Each $R_i$ is a nonnegative residual term, typically a squared norm,
    fixed as part of the proof representation.
\end{enumerate}
Since every summand on the right-hand side is nonnegative, the proof identity \eqref{eq:generic-proof} proves the target bound \eqref{eq:target}.
Classes of problems known to admit such decompositions are detailed in \Cref{app:general-pep-framework}.

\subsection{Proof complexity}\label{sec:proof-complexity}
The proof identity \eqref{eq:generic-proof} gives a direct way to discuss the complexity of
a convergence proof. Once the iteration $\hor$, the point set $\labels$, and the
allowed inequality and residual terms are fixed, the complexity measures below
track which valid inequalities appear with positive multipliers and which
residual terms have positive coefficients.

For a certificate $C$ of the form~\eqref{eq:generic-proof}, define the active
hypothesis and residual sets by
\begin{equation*}
  \begin{aligned}
    \mathcal A^H(C)
    &\triangleq
    \Bigset{(i,j,\ell): i,j\in\labels,\
    \ell\in\KK_{i,j},\
    \lambda^{\Bigp{\ell}}_{i,j}>0,\
    H^{\Bigp{\ell}}\Bigp{x_i,x_j}\not\equiv0},
    \\
    \mathcal A^R(C)
    &\triangleq
    \Bigset{i\in\llbracket1,r\rrbracket:c_i>0}.
  \end{aligned}
\end{equation*}
We call the pair $\Bigp{\mathcal A^H(C),\mathcal A^R(C)}$ the
\textit{active pattern} of $C$. Thus
$(i,j,\ell)\in\mathcal A^H(C)$ means that the proof uses the hypothesis
$H^{\Bigp{\ell}}\Bigp{x_i,x_j}\geq0$, while
$i\in\mathcal A^R(C)$ means that the residual term $R_i$ appears in the
residual decomposition. These active sets induce the two complexity measures used below:
\begin{enumerate}
    \item the \textbf{hypothesis complexity} of a proof identity is the number of active inequalities $\abs{\mathcal A^H(C)}$,
    \item the \textbf{residual complexity} of a proof identity is the number of active residual terms $
        \abs{\mathcal A^R(C)}$.
\end{enumerate}

For numerical experiments, the active sets are thresholded:
$\lambda^{\Bigp{\ell}}_{i,j}>0$ and $c_i>0$ are replaced by
$\lambda^{\Bigp{\ell}}_{i,j}>\varepsilon_{\mathrm{act}}$ and
$c_i>\varepsilon_{\mathrm{act}}$, with the tolerance specified in the
corresponding experiment or table.

%% file: sections/heuristic_search.tex
\section{Heuristic search for simple proofs}\label{sec:heuristic-search}

We now introduce concrete search procedures that use the proof-complexity
measures above to find simpler certificates. Throughout this section, we fix a
convergence guarantee, a finite list of available valid hypotheses, and a
residual decomposition. The goal is to find another certificate for essentially the same
guarantee that uses fewer active hypotheses, simpler residual terms, or both.
We write
\[
    \mathcal J
    \triangleq
    \Bigset{(i,j,\ell): i,j\in\labels,\ 
    \ell\in\KK_{i,j}}
\]
for the available hypothesis indices. For
$h=(i,j,\ell)\in\mathcal J$, abbreviate
$H_h\triangleq H^{\Bigp{\ell}}\Bigp{x_i,x_j}$ and
$\lambda_h\triangleq\lambda^{\Bigp{\ell}}_{i,j}$.

There are two useful choices to make before applying these heuristics:
\begin{enumerate}
    \item \textbf{Expand the hypothesis set.} When available, add redundant but
    valid inequalities (e.g., the descent lemma)
    before sparsifying. This larger finite list fixes what counts as an
    \textit{available hypothesis} and can eventually lead to simpler certificates, even though the
    search starts with more possible terms.
    \item \textbf{Relax the target.} Let $\rho_\star>0$ denote the best rate
    certified by the chosen formulation. For a target relative suboptimality
    tolerance $\varepsilon_{\mathrm{rel}}\geq0$, set $\bar\rho \triangleq \Bigp{1+\varepsilon_{\mathrm{rel}}}\rho_\star$.
\end{enumerate}

\subsection{Exhaustive sparsification}\label{sec:exhaustive-sparsification}
Finding a proof with minimal hypothesis complexity is a best-subset-type
problem over the available hypotheses, analogous to classical best subset
selection in sparse regression~\citep{bertsimas2016best}. Closely related
minimum-cardinality feasibility and sparse-approximation problems are
NP-hard~\citep{amaldi1998approximability,natarajan1995sparse}. Exhaustive
sparsification has the same combinatorial character: for each candidate active
pattern, one must check whether there exists a certificate using no hypotheses
outside that pattern. Exhaustive sparsification is the
direct combinatorial baseline. It is useful as a ground-truth benchmark on small
instances, but its cost still scales exponentially with the number of possible active hypothesis patterns.

In the indexed setting of~\eqref{eq:generic-proof}, a candidate active
pattern is a pair $\Pi=\Bigp{I_H,I_R}$ with
$I_H\subseteq\mathcal J$ and $I_R\subseteq\llbracket1,r\rrbracket$. Define
\[
    \rho\Bigp{\Pi}
    \triangleq
    \inf\Bigset{\rho\in\R:
        \exists\text{ a certificate }C\text{ of rate }\rho
        \text{ such that }
        \mathcal A^H(C)\subseteq I_H,\ 
        \mathcal A^R(C)\subseteq I_R}.
\]
By convention, $\rho\Bigp{\Pi}=+\infty$ if the set above is empty, i.e., if no
finite-rate certificate with that active-pattern restriction exists. Given a
tolerance level $\bar\rho$, call $\Pi$ admissible if
$\rho\Bigp{\Pi}\leq\bar\rho$. Since restricting the active
pattern cannot improve the optimum, $\rho\Bigp{\Pi}\geq\rho_\star$. With this
notation, exhaustive sparsification keeps all residual terms available and computes
\[
    h_\star\triangleq
    \min\Bigset{\abs{I_H}:
        I_H\subseteq\mathcal J,\ 
        \rho\Bigp{\Bigp{I_H,\llbracket1,r\rrbracket}}\leq\bar\rho},
\]
then returns all admissible hypothesis patterns $I_H$ of size $h_\star$.

\subsection{Sparse minimization heuristics}\label{sec:sparse-minimization-heuristics}
In this section, for clarity of exposition, we focus on \textit{hypothesis complexity}. The same ideas can be applied to \textit{residual complexity}, for example by penalizing the rank of residual slack matrices through log-det heuristics~\citep{fazel2003log}, but we do not detail them here.

The exhaustive baseline is exact but quickly becomes too expensive. A scalable
alternative is to optimize a \textit{sparsity surrogate} over the set of
certificates that prove the relaxed target. Let $\mathcal C_{\bar\rho}$ denote the certificates in
the fixed search space that certify a bound no larger than $\bar\rho$. Each such
certificate $C\in\mathcal C_{\bar\rho}$ is defined by nonnegative hypothesis
multipliers $(\lambda_h)_{h\in\mathcal J}$ and active hypothesis set
$\mathcal A^H(C)=\Bigset{h\in\mathcal J:
\lambda_h>0,\ H_h\not\equiv0}$.

We consider separable penalties $\phi_h:\R_+\to\R$ applied to these
multipliers:
\begin{equation}
    \label{eq:separable-sparse-penalty}
    \begin{aligned}
    \problemhead{minimize}{C\in \mathcal C_{\bar\rho}}
        &
        \sum_{h\in\mathcal J}\phi_h\Bigp{\lambda_h}.
    \end{aligned}
\end{equation}
The choices below approximate the number of active hypothesis multipliers. A natural first choice is
\begin{equation*}
    \tag{Plain $\ell_1$}
    \phi_h\Bigp{x}=x,
\end{equation*}
which is the standard lasso-type convex relaxation of
sparsity~\citep{tibshirani1996regression}.
Second, a common nonconvex surrogate for the number of nonzero coordinates is the
log-sum penalty:
\begin{equation*}
    \tag{Log-sum}
    \phi_h\Bigp{x}=\log\Bigp{x+\delta},
\end{equation*}
where $\delta > 0$ stabilizes the logarithm near zero.
Its first-order majorization gives reweighted $\ell_1$
iterations~\citep{candes2008enhancing}.
Because the attainable ranges of the multipliers can differ by orders of
magnitude, an unnormalized log-sum surrogate can impose much larger effective
shrinkage on some multipliers than on others. To remedy this, a common strategy is to introduce
multiplier-specific normalizations.
\begin{equation*}
    \tag{Normalized log-sum}
    \phi_h\Bigp{x}=\log\Bigp{\frac{x}{M_h}+\delta},
\end{equation*}
where $M_h > 0$ is a reference scale for the multiplier of
$H_h\geq0$, estimated from its feasible range and clipped to a
positive bounded interval in the experiments below.
To distinguish active multipliers rather than their magnitudes, one can use a
capped penalty:
\begin{equation*}
    \tag{Capped $\ell_1$}
    \phi_h\Bigp{x}= \min\Bigset{\theta,\frac{x}{M_h}}.
\end{equation*}
The cap is linear for $x/M_h<\theta$ and then saturates, so $\theta$ is the
normalized activation threshold. Thus the objective behaves like a scaled
active-multiplier count rather than a magnitude penalty. This is the standard capped-$\ell_1$ sparsity
surrogate handled by multi-stage convex
relaxation~\citep{zhang2010analysis}.

In practice, nonlinear choices in~\eqref{eq:separable-sparse-penalty} are optimized by
iterative majorization: at each step, we replace the penalty by a linear upper
bound at the current multipliers. Given weights $w$, define
\begin{equation}
    \label{eq:l1-reweighted}
    \begin{aligned}
        \textsc{Weighted-Cert}\Bigp{w,\bar\rho}
        \in
        \argmin_{C\in\mathcal C_{\bar\rho}}
        \sum_{h\in\mathcal J} w_h\lambda_h.
    \end{aligned}
\end{equation}
returning a minimizing certificate and its hypothesis multipliers. 
The optimization problem in \eqref{eq:l1-reweighted} is convex and thus solvable efficiently. 
Along iterations, the relaxed target $\bar \rho$ stays fixed; only the selection criterion changes.
For the concave penalties above, the linear majorization weights are chosen from
the corresponding superdifferentials:
$w_h^{[m+1]}\in\partial\phi_h(\lambda_h^{[m]})$.
For normalized log-sum,
$w_h^{[m+1]}=(\lambda_h^{[m]}+\delta M_h)^{-1}$, while capped $\ell_1$ gives
$w_h^{[m+1]}=M_h^{-1}$ below the cap and $w_h^{[m+1]}=0$ above it,
with any supergradient between these values at the threshold.
The resulting procedure is summarized in \Cref{alg:l1-simplification}. In the
algorithm, bracketed superscripts denote iteration counters, so they do not
conflict with the hypothesis superscript $\ell$.

\algtext*{EndFor}
\begin{algorithm}
    \caption{Majorized sparsification heuristic}
    \label{alg:l1-simplification}
    \begin{algorithmic}[1]
        \Require Relaxed target $\bar\rho$, coordinate penalties $\phi_h$, initial weights $w^{[0]}$, number of iterations $T$
        \For{$m\in\llbracket 0,T-1 \rrbracket$}
            \State $C^{[m]}\gets \textsc{Weighted-Cert}\Bigp{w^{[m]},\bar\rho}$
            \State Let $\lambda^{[m]}$ denote the corresponding multipliers
            \State Choose $w_h^{[m+1]}\in\partial\phi_h\Bigp{\lambda_h^{[m]}}$, $\forall h\in\mathcal J$
        \EndFor
        \State \Return $C^{[T-1]}$.
    \end{algorithmic}
\end{algorithm}
The output certificate proves the relaxed target, and
$\mathcal A^H\Bigp{C^{[T-1]}}$ is the selected sparsification pattern.

\subsection{Discovering proof structures through intermediate lemmas}\label{sec:automatic}

The previous heuristics simplify a certificate after the list of available
hypotheses has been fixed. We can also enlarge this list by searching for
additional valid inequalities that may serve as reusable steps in a shorter
proof.

Let $\mathcal L_{\mathrm{cand}}$ denote the candidate-lemma index set. For each
$\kappa\in\mathcal L_{\mathrm{cand}}$, let
$\mathcal I_\kappa\subseteq\mathcal J$ be the hypotheses allowed in its short
proof, and let $\Psi_\kappa$ be the proposed auxiliary inequality. In the
notation of \eqref{eq:generic-proof}, this means that
\[
    \Psi_\kappa\geq0,\qquad
    \Psi_\kappa
    =
    \sum_{h\in\mathcal I_\kappa}
        \alpha_h^{\Bigp{\kappa}} H_h
    +
    \sum_{i=1}^{r_\kappa} \beta_i^{\Bigp{\kappa}} R_i^{\Bigp{\kappa}},
    \qquad
    \alpha_h^{\Bigp{\kappa}},\beta_i^{\Bigp{\kappa}}\geq0.
\]
Thus the validity of $\Psi_\kappa\geq0$ follows from the same proof structure as the
original certificate. A candidate lemma becomes an
\emph{intermediate lemma} only when it is used with positive multiplier in the
simplified proof identity, for example
\[
    \rho(\hor)\Ic_f-\Pc_f
    =
    \sum_{h\in\mathcal J}\lambda_hH_h
    +
    \sum_{\kappa\in\mathcal L_{\mathrm{cand}}}\eta_\kappa\Psi_\kappa
    +
    \sum_{i=1}^r c_i R_i,
    \qquad
    \lambda_h,\eta_\kappa,c_i\geq0.
\]
Substituting the selected candidate lemmas back into this identity recovers a
certificate of the original form; the intermediate lemmas simply expose useful
derived inequalities as separately checkable steps. The SDP formulation and
extraction procedure for candidate lemmas are deferred to
\Cref{app:candidate-lemma-sdp-formulation}.

%% file: sections/experimental_examples.tex
\section{Experimental examples}\label{sec:experimental-examples}
The examples below apply the proposed simplification procedures to several
different problems. In each case, we start from numerical certificate searches
and extract explicit proof identities or multiplier patterns from their output.
We begin with one-step gradient descent, where the classical function-value
contraction admits a short proof despite dense raw certificates. We then consider
fast-gradient methods, where sparsification reduces the hypothesis complexity.
Finally, the proximal examples show how the candidate-lemma SDP recovers
Lyapunov functions and one-step inequalities that yield tight proofs for the
proximal point method and its accelerated variant in the monotone-operator
setting.

The notebooks and code implementing the procedures used in these examples are
available in the public source-code repository
\url{https://github.com/DanielBergThomsen/simple-proofs}.
All reported experiments were run on a MacBook Pro
with an Apple M4 Max chip, 14 CPU cores, and 36 GB of memory. The examples are
small deterministic SDP and enumeration computations; except for the largest
exhaustive active-set checks, they are not computationally intensive.

\subsection{Gradient descent}\label{sec:gradient-descent}
Consider one step of gradient descent,
\begin{equation}
    x_1 = x_0 - \gamma \nabla f\Bigp{x_0},
    \tag{GD}
\end{equation}
where $\gamma > 0$ is the stepsize. We are interested in worst-case guarantees on
the functional residual after one step on the class $\F_{\mu,L}$, $0\leq\mu<L$,
under the initial normalization $f\Bigp{x_0}-f_\star \leq 1$. The corresponding
PEP and SDP formulations are collected in
\Cref{app:pep-gd}.

For such functions $f \in \F_{\mu,L}$, and for any pair of points $u,v\in\R^d$, define
\begin{equation}
\begin{aligned}
    H_{\mu,L}\Bigp{u,v}
    &\triangleq
    f\Bigp{u}-f\Bigp{v}
    -\inner{\nabla f\Bigp{v}}{u-v}
    -\frac{\mu}{2}\Bignorm{u-v}^2
    \\
    &\quad
    -\frac{1}{2\Bigp{L-\mu}}
    \Bignorm{
        \nabla f\Bigp{u}-\nabla f\Bigp{v}-\mu\Bigp{u-v}
    }^2
\end{aligned}
    \label{eq:gd-interpolation-inequality}
\end{equation}
By the two-point interpolation property for $\F_{\mu,L}$~\citep{taylor2017smooth},
$H_{\mu,L}\Bigp{u,v}\geq0$ for all points $u,v$.

\subsubsection{Exhaustive sparsification}\label{sec:gd-exhaustive-sparsification}
For the one-step functional-residual experiment, exhaustive enumeration is still
practical because the points $x_\star$, $x_0$, and $x_1$ give rise to only six
nontrivial two-point interpolation inequalities.
\Cref{fig:gd-functional-residual-exhaustive} compares the raw certificate
weights with the weights retained after exhaustive sparsification. The retained
weights multiply $H_{\mu,L}\Bigp{x_\star,x_0}$,
$H_{\mu,L}\Bigp{x_\star,x_1}$, and $H_{\mu,L}\Bigp{x_0,x_1}$; the weights on
the other three two-point inequalities vanish. They follow the branchwise
closed forms below for
$0<\gamma\leq 2/L$, with threshold $\gamma = \frac{2}{L+\mu}$:
\begin{equation*}
    \begin{aligned}
        \lambda_{\star,0}
        &=
        \begin{cases}
            \gamma\mu\Bigp{1-\gamma\mu}, & \gamma \leq \frac{2}{L+\mu}, \\
            \Bigp{2-\gamma L}\Bigp{\gamma L-1}, & \gamma \geq \frac{2}{L+\mu},
        \end{cases}
        \qquad
        \begin{aligned}
            \lambda_{\star,1} &= \min\!\Bigset{\gamma\mu,\ 2-\gamma L},\\
            \lambda_{0,1} &= \max\!\Bigset{1-\gamma\mu,\ \gamma L-1}.
        \end{aligned}
    \end{aligned}
\end{equation*}

\begin{figure}[t]
    \centering
    \includegraphics[width=.79\linewidth]{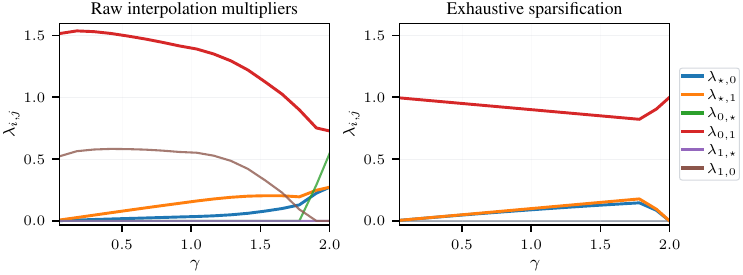}
    \caption{One-step gradient descent with functional-residual normalization. Left: raw certificate weights on the stepsize grid. Right: the retained weights after exhaustive sparsification.}
    \label{fig:gd-functional-residual-exhaustive}
\end{figure}

\subsubsection{Fitted curvatures for the classical rate of GD}\label{sec:gd-automatic-hypotheses}
Applying the candidate-lemma search of \Cref{sec:automatic} with one singleton
candidate for each two-point inequality recovers a tight one-step proof of the
GD function-value guarantee for every stepsize $0<\gamma\leq 2/L$. The rate is
the classical fixed-step function-value contraction; here, the search
rediscovers it as a three-hypothesis certificate.
Concretely, the singleton candidate lemmas can be recognized as
interpolation inequalities for the fitted larger class
\begin{equation}
    \F_{\mu,L}
    \subseteq
    \F_{\mu_\gamma,L_\gamma},
    \qquad
    \Bigp{\mu_\gamma,L_\gamma}
    \triangleq
    \begin{cases}
        \Bigp{\mu,\ \frac{2}{\gamma}-\mu},
        & 0<\gamma\leq\frac{2}{L+\mu},\\[1mm]
        \Bigp{\frac{2}{\gamma}-L,\ L},
        & \frac{2}{L+\mu}\leq\gamma\leq\frac{2}{L}.
    \end{cases}
    \label{eq:gd-fitted-curvatures}
\end{equation}
\Cref{fig:gd-fitted-curvature-scan} shows the fitted curvatures extracted by a
candidate-lemma SDP after fixing the sparse certificate weights on a
representative grid; the measured points lie on the closed-form branches in
\eqref{eq:gd-fitted-curvatures}.
For $0<\gamma\leq2/L$, these constants satisfy
$0\leq\mu_\gamma\leq\mu<L\leq L_\gamma$, and
$\gamma=2/\Bigp{L_\gamma+\mu_\gamma}$; hence the fitted inequalities are valid
for every original $\F_{\mu,L}$ instance, but impose only the weaker
requirements of the larger class.

\begin{figure}[t]
    \centering
    \includegraphics[width=.79\linewidth]{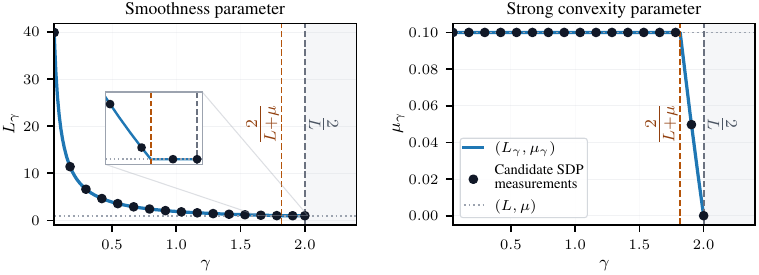}
    \caption{Fitted interpolation curvatures identified from singleton
    candidate lemmas for the one-step gradient descent certificate, shown for
    $L=1$ and $\mu=0.1$.
    The SDP measurements follow the closed-form curves: before the threshold
    $2/(L+\mu)$, the fitted class keeps $\mu_\gamma=\mu$ and increases
    $L_\gamma$; after the threshold, it keeps $L_\gamma=L$ and weakens
    $\mu_\gamma$. Dotted horizontal lines mark the original constants.}
    \label{fig:gd-fitted-curvature-scan}
\end{figure}

Define
\begin{equation}
    \begin{gathered}
    \rho_\gamma
    \triangleq
    \max\Bigset{\Bigp{1-\gamma\mu}^2,\Bigp{\gamma L-1}^2}
    =
    \Bigp{1-\gamma\mu_\gamma}^2
    =
    \Bigp{\gamma L_\gamma-1}^2,
    \\
    \Bigp{\lambda_{\star,0},\lambda_{\star,1},\lambda_{0,1}}
    \triangleq
    \Bigp{\gamma\mu_\gamma\sqrt{\rho_\gamma},
    \gamma\mu_\gamma,\sqrt{\rho_\gamma}}.
    \end{gathered}
    \label{eq:gd-certificate-ingredients}
\end{equation}
The three $\lambda$'s are nonnegative for $0<\gamma\leq2/L$. On the large-step
branch, the fitted value
$\mu_\gamma=2/\gamma-L$ is exactly the weaker strong-convexity parameter used by
\citet[after Eq.~(1.3)]{uschmajew2022note}; the proof below uses the same
weakening trick but covers the full range $0<\gamma\leq2/L$.

\begin{theorem}[Worst-case performance of one-step GD]
\label{thm:gd-fitted-certificate}
Let $f\in\F_{\mu,L}$ with $0\leq\mu<L$, and let
$0<\gamma\leq2/L$. With the quantities in
\eqref{eq:gd-fitted-curvatures}--\eqref{eq:gd-certificate-ingredients},
one step of gradient descent satisfies
$f\Bigp{x_1}-f_\star\leq\rho_\gamma\Bigp{f\Bigp{x_0}-f_\star}$.
\end{theorem}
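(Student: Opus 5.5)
The plan is to exhibit an explicit certificate of the form \eqref{eq:generic-proof}. It uses only the three hypotheses singled out by exhaustive sparsification, but written for the \emph{fitted} class. Since $0\leq\mu_\gamma\leq\mu<L\leq L_\gamma$, we have $\F_{\mu,L}\subseteq\F_{\mu_\gamma,L_\gamma}$. So the two-point interpolation property gives $H_{\mu_\gamma,L_\gamma}(u,v)\geq0$ for all $u,v$, in particular for the pairs $(x_\star,x_0)$, $(x_\star,x_1)$ and $(x_0,x_1)$. I first check that the multipliers in \eqref{eq:gd-certificate-ingredients} are nonnegative: $\gamma>0$, $\mu_\gamma\geq0$ and $\rho_\gamma\geq0$. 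The degenerate case $\mu_\gamma=L_\gamma$ cannot occur because $L_\gamma\geq L>\mu\geq\mu_\gamma$. The goal is then the identity
\begin{equation*}
\rho_\gamma\Bigp{f(x_0)-f_\star}-\Bigp{f(x_1)-f_\star}
=\lambda_{\star,0}H_{\mu_\gamma,L_\gamma}(x_\star,x_0)+\lambda_{\star,1}H_{\mu_\gamma,L_\gamma}(x_\star,x_1)+\lambda_{0,1}H_{\mu_\gamma,L_\gamma}(x_0,x_1)+R,
\end{equation*}
with $R$ a nonnegative residual, which I expect to be identically zero or a single squared norm. This identity immediately gives the claim.

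Write $m=\mu_\gamma$, $M=L_\gamma$ and $s=\sqrt{\rho_\gamma}$. I will use the two relations recorded after \eqref{eq:gd-fitted-curvatures}: $\gamma(m+M)=2$ and $s=1-\gamma m=\gamma M-1$. \textbf{Step 1 (function values).} I match the coefficients of $f_0$, $f_1$ and $f_\star$. The coefficient of $f_0$ on the right is $\lambda_{0,1}-\lambda_{\star,0}=s(1-\gamma m)=s^2=\rho_\gamma$. The coefficient of $f_1$ is $-(\lambda_{\star,1}+\lambda_{0,1})=-(\gamma m+s)=-1$. The coefficient of $f_\star$ is then forced to be $1-\rho_\gamma$, as required. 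So the linear part of the identity already matches.

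\textbf{Step 2 (quadratic part).} I use $\nabla f(x_\star)=0$ and $x_1-x_\star=(x_0-x_\star)-\gamma g_0$, where $g_k=\nabla f(x_k)$. Then every remaining term is a quadratic form in the three vectors $(x_0-x_\star,g_0,g_1)$. Therefore $R$ is determined: it equals minus the quadratic parts of $\sum\lambda H$, which is a $3\times3$ Gram form with explicit coefficients in $\gamma$, $m$ and $M$. It remains to show that this form is positive semidefinite. I will expand it and substitute $M=2/\gamma-m$ and $s=1-\gamma m$, which reduces everything to the two parameters $\gamma$ and $m$. I expect the form to collapse to a perfect square, plausibly a multiple of $\Bignorm{g_1-c_1 g_0-c_2(x_0-x_\star)}^2$, or to vanish. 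That would match the tightness of the rate, with one residual at most.

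The main obstacle is this algebraic verification in Step 2. In particular, the factor $1/(2(M-m))$ in $H$ produces denominators, and the terms only cancel after using $\gamma(m+M)=2$. To keep it manageable I will multiply through by $2(M-m)/\gamma$. I will then check the PSD claim entrywise: first the coefficients of $\Bignorm{g_1}^2$ and $\inner{g_1}{\cdot}$, which come only from the pairs involving $x_1$, and then complete the square in $g_1$. The leftover form in $(x_0-x_\star,g_0)$ should then vanish identically. If that fails, I would fall back on a direct $2\times2$ PSD check via its determinant. Finally, the identity holds on both branches of \eqref{eq:gd-fitted-curvatures} simultaneously, since it uses only the two relations $\gamma(m+M)=2$ and $s=1-\gamma m$.
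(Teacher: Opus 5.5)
Your proposal is correct and follows the paper's proof: it uses the same three fitted-class inequalities $H_{\mu_\gamma,L_\gamma}$ on $(x_\star,x_0)$, $(x_\star,x_1)$, $(x_0,x_1)$, with the multipliers of \eqref{eq:gd-certificate-ingredients}, and closes with a single squared-norm residual. Your complete-the-square step in $g_1$ leaves nothing behind, as you expect, and the residual is exactly the paper's $R_\gamma=\frac{\gamma}{4\sqrt{\rho_\gamma}}\Bignorm{\nabla f(x_0)+\nabla f(x_1)-\gamma\mu_\gamma L_\gamma(x_0-x_\star)}^2$; in your notation this is the case $c_1=-1$ and $c_2=\gamma\mu_\gamma L_\gamma$.
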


\begin{proof}
The inclusion $\F_{\mu,L}\subseteq\F_{\mu_\gamma,L_\gamma}$ makes the three
$H$-terms below nonnegative.
Substituting
$x_1-x_\star=x_0-x_\star-\gamma\nabla f\Bigp{x_0}$ and
$\gamma=2/\Bigp{L_\gamma+\mu_\gamma}$ gives the certificate identity
\[
\begin{aligned}
    \rho_\gamma\Bigp{f\Bigp{x_0}-f_\star}
    -\Bigp{f\Bigp{x_1}-f_\star}
    &=
    \lambda_{\star,0}H_{\mu_\gamma,L_\gamma}\Bigp{x_\star,x_0}
    +\lambda_{\star,1}H_{\mu_\gamma,L_\gamma}\Bigp{x_\star,x_1}
    \\
    &\quad
    +\lambda_{0,1}H_{\mu_\gamma,L_\gamma}\Bigp{x_0,x_1}
    +R_\gamma.
\end{aligned}
\]
Here
\[
    R_\gamma
    \triangleq
    \frac{\gamma}{4\sqrt{\rho_\gamma}}
    \Bignorm{
        \nabla f\Bigp{x_0}+\nabla f\Bigp{x_1}
        -\gamma\mu_\gamma L_\gamma\Bigp{x_0-x_\star}
    }^2 .
\]
Since the multipliers and $R_\gamma$ are also nonnegative, this
proves the claim.
\end{proof}

\begin{remark}[Comparison with original curvatures]
\label{rem:gd-original-curvatures}
For $0<\mu<L$, setting the nonsmooth term $h$ to zero in the
proof of \citet[Theorem~3.3]{taylor2018proximal} recovers
this statement. Using the same three active multipliers with the
original $\F_{\mu,L}$ interpolation inequalities gives instead
\[
\begin{aligned}
    \rho_\gamma\Bigp{f\Bigp{x_0}-f_\star}
    -\Bigp{f\Bigp{x_1}-f_\star}
    &=
    \lambda_{\star,0}H_{\mu,L}\Bigp{x_\star,x_0}
    +\lambda_{\star,1}H_{\mu,L}\Bigp{x_\star,x_1}
    \\
    &\quad
    +\lambda_{0,1}H_{\mu,L}\Bigp{x_0,x_1}
    +R_\gamma'.
\end{aligned}
\]
Specializing the branchwise slack decomposition from that proof to $h\equiv0$ gives a
nonnegative but more complicated residual; its closed forms on the two
stepsize branches are given in \Cref{app:gd-original-residuals}.
\end{remark}

\subsection{Fast gradient method}\label{sec:fast-gradient-method}
For a horizon $N$, we consider the following parameterization of the
smooth-convex fast gradient method, with the $k$-indexed updates taken over
$k\in\llbracket 0,N-2 \rrbracket$:
\begin{equation}
    \begin{alignedat}{2}
        y_0 &= x_0,\qquad&
        x_N &= y_{N-1} - \frac{1}{L}\nabla f\Bigp{y_{N-1}}, \\
        x_{k+1} &= y_k - \frac{1}{L}\nabla f\Bigp{y_k},\qquad&
        y_{k+1} &= x_{k+1} + \frac{k}{k+3}\Bigp{x_{k+1} - x_k}.
    \end{alignedat}
    \tag{FGM}
\end{equation}
In contrast with the GD example,
this one is posed on the smooth-convex class $\F_L$, with distance
normalization $\Bignorm{x_0-x_\star}^2 \leq 1$ and terminal performance measure
$f\Bigp{x_N}-f_\star$. The corresponding PEP and SDP formulations are collected in
\Cref{app:pep-fgm-cvx}.

Let $H_L\Bigp{u,v}\triangleq H_{0,L}\Bigp{u,v}$;
for any $f\in\F_L$, one has $H_L\Bigp{u,v}\geq0$ for all
points $u,v$.

\paragraph{Target rate.} Under this normalization, the conjectured optimal
worst-case rate for the smooth-convex FGM endpoint is
$\rho_\star\Bigp{N,L}=\frac{2L}{N^2+5N+6}$~\citep[Table~1]{taylor2017exact}.
The same table gives the relaxed FGM target
$\rho_{\mathrm{rel}}\Bigp{N,L}=\frac{2L}{N^2+5N+2}$.
The relative suboptimality criterion is set so that this relaxed rate can be
recovered, allowing an additional tolerance for numerical noise.

\subsubsection{\texorpdfstring{$\ell_1$}{l1}-heuristics}\label{sec:fgm-l1-heuristics}
For the computer-aided sparsity comparison we use only the base
smooth-convex inequalities $H_L$ at $N=3$. The points
$\Bigset{x_\star,y_0,y_1,y_2,x_3}$ generate the ordered two-point inequalities
used in the search.
At this size, exhaustive enumeration is still practical and provides a useful
calibration for the penalty-based searches. Plain $\ell_1$ and log-sum are
scale-sensitive on this instance and retain denser patterns, while the normalized
log-sum heuristic from
\Cref{sec:sparse-minimization-heuristics} moves the returned pattern closer to
the exhaustive one. The active-inequality patterns for the $N=3$ comparison are
detailed in \Cref{app:fgm-multiplier-support}. The same comparison across longer
horizons is shown in \Cref{fig:fgm-cardinality-comparison}.

\begin{figure}[t]
    \centering
    \includegraphics[width=.79\linewidth]{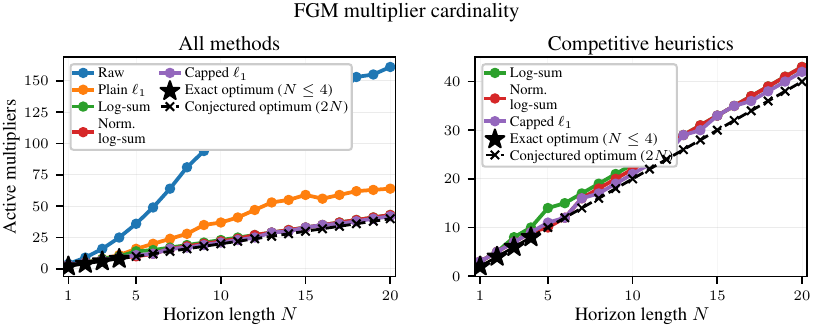}
    \caption{
    FGM hypothesis complexity across horizon lengths: all methods on
    the left, and the competitive continuous sparsification heuristics on the
    right, which track the exact or conjectured sparse active-inequality pattern
    more closely.}
    \label{fig:fgm-cardinality-comparison}
\end{figure}

%% file: sections/proximal_certificates.tex
\section{Proximal certificates from candidate lemmas}
\label{sec:proximal-certificates}
The candidate-lemma SDP can return a local Lyapunov inequality rather than
only a shorter terminal certificate \citep{rotaru2026thesis,taylor2019stochastic}.
In the two examples below, the SDP identifies both the potential and its
one-step decrement for the proximal point method and its accelerated variant;
the closed-form proofs are given in \Cref{app:proximal-certificate-proofs}.

\subsection{Proximal point residuals}
Let $F:\R^d\to\R\cup\{+\infty\}$ be closed, proper, and convex, and let
$x_\star\in\argmin F$ with $F_\star=F\Bigp{x_\star}$. Fix a positive-definite
$B\in\mathbb S^d$.
For positive stepsizes $\alpha_1,\ldots,\alpha_N$, the proximal point method is
\begin{equation}
    x_k\in
    \argmin_{x\in\R^d}
    \Bigset{
        F\Bigp{x}+\frac{1}{2\alpha_k}\Bignorm{x-x_{k-1}}_B^2
    },
    \qquad
    g_k\triangleq\frac{B\Bigp{x_{k-1}-x_k}}{\alpha_k}\in\partial F\Bigp{x_k}.
    \tag{PPM}
\end{equation}
Writing $A_N=\sum_{k=1}^N\alpha_k$, the recovered certificate gives a
considerably simpler Lyapunov-function proof of the sharp last-residual
estimate conjectured by \citet{taylor2017exact} and proved by
\citet{guyang2023tight}; the same Lyapunov function yields the value bound.
\begin{restatable}[PPM residual and value bounds]{theorem}{pparesidualvaluethm}
\label{thm:ppa-residual-value}
If $\Bignorm{x_0-x_\star}_B\leq\Delta_0$, then for every $N\geq1$,
\[
    \Bignorm{g_N}_{B^{-1}}\leq\frac{\Delta_0}{A_N},
    \qquad
    F\Bigp{x_N}-F_\star\leq\frac{\Delta_0^2}{4A_N}.
\]
\end{restatable}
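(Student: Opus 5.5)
The plan is to reduce to the Euclidean case and then run a one-step Lyapunov argument whose potential dominates both target quantities at once. \textbf{Reduction.} Substitute $y=B^{1/2}x$ and $\tilde F(y)=F(B^{-1/2}y)$. Then the (PPM) iteration becomes the standard proximal point method for $\tilde F$ in the Euclidean norm, the subgradient becomes $\tilde g_k=B^{-1/2}g_k$, and $\Bignorm{g_k}_{B^{-1}}=\Bignorm{\tilde g_k}$, $\Bignorm{x_0-x_\star}_B=\Bignorm{y_0-y_\star}$. So it suffices to treat $B=I$. Write $A_0=0$ and $A_k=A_{k-1}+\alpha_k$; the update reads $x_{k-1}=x_k+\alpha_k g_k$ with $g_k\in\partial F(x_k)$.

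\textbf{Potential and terminal bounds.} The candidate potential I expect the candidate-lemma SDP to return is
\[
\Phi_k\triangleq\Bignorm{x_k+A_kg_k-x_\star}^2,\qquad \Phi_0=\Bignorm{x_0-x_\star}^2\leq\Delta_0^2 .
\]
Expanding and using $\inner{g_k}{x_k-x_\star}\geq F(x_k)-F_\star\geq0$ (convexity) gives
\[
\Phi_k\geq\Bignorm{x_k-x_\star}^2+2A_k\Bigp{F(x_k)-F_\star}+A_k^2\Bignorm{g_k}^2 .
\]
This lower bound yields both conclusions. First, dropping the nonnegative terms gives $\Phi_k\geq A_k^2\Bignorm{g_k}^2$. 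Second, AM--GM together with Cauchy--Schwarz gives
\[
\Bignorm{x_k-x_\star}^2+A_k^2\Bignorm{g_k}^2\geq2A_k\Bignorm{g_k}\Bignorm{x_k-x_\star}\geq2A_k\Bigp{F(x_k)-F_\star},
\]
hence $\Phi_k\geq4A_k\Bigp{F(x_k)-F_\star}$. Therefore monotonicity $\Phi_N\leq\Phi_0$ would give both $\Bignorm{g_N}\leq\Delta_0/A_N$ and $F(x_N)-F_\star\leq\Delta_0^2/(4A_N)$. The case $k=1$ is consistent with this, since then $x_1+A_1g_1=x_0$ exactly.

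\textbf{One-step decrease (main obstacle).} Using $x_k+A_kg_k=x_{k-1}+A_{k-1}g_k$, the decrement is
\[
\Phi_{k-1}-\Phi_k=2A_{k-1}\inner{x_{k-1}-x_\star}{g_{k-1}-g_k}+A_{k-1}^2\Bigp{\Bignorm{g_{k-1}}^2-\Bignorm{g_k}^2}.
\]
The second bracket is nonnegative: it equals $\Bignorm{g_{k-1}-g_k}^2+2\inner{g_k}{g_{k-1}-g_k}$, and $\inner{g_k}{g_{k-1}-g_k}\geq0$ is monotonicity of $\partial F$ between $x_{k-1}$ and $x_k$, because $x_{k-1}-x_k=\alpha_kg_k$. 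The first term does not have an obvious sign, and this is where I expect the real work.

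To handle it, I would write the decrement as a nonnegative combination of the three convexity (interpolation) inequalities among $x_\star,x_{k-1},x_k$, plus a square. This is exactly the kind of certificate the SDP of \Cref{sec:automatic} produces. If a pure combination fails, the first thing I would try is to augment the potential with function-value terms, for instance $\Phi_k+c_k\Bigp{F(x_k)-F_\star}$ or a correction $-c_k\inner{g_k}{x_k-x_\star}$. I would choose the coefficients $c_k$ (depending on $A_k$, and in the limit possibly zero) so that the $x_\star$-cross terms cancel against $F(x_{k-1})-F(x_k)\geq\inner{g_k}{x_{k-1}-x_k}$ and $F_\star\geq F(x_{k-1})+\inner{g_{k-1}}{x_\star-x_{k-1}}$. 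I would then recheck that the modified potential still dominates $A_N^2\Bignorm{g_N}^2$ and $4A_N\Bigp{F(x_N)-F_\star}$ via the same AM--GM step, and that its initial value is still at most $\Delta_0^2$. Telescoping $\Phi_N\leq\Phi_0\leq\Delta_0^2$ then concludes the proof.
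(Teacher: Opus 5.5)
Your reduction to $B=I$, the identity $x_k+A_kg_k=x_{k-1}+A_{k-1}g_k$, and the terminal step are all correct; the terminal step (lower-bounding $\Phi_k$, then AM--GM and Cauchy--Schwarz) is exactly the Young-inequality step the paper uses. The gap is the one-step decrease, which carries the whole proof. The potential $\Phi_k=\Bignorm{x_k+A_kg_k-x_\star}^2$ is \emph{not} nonincreasing, so no combination of interpolation inequalities can certify its decrease.

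To see why, write $C_{i,j}\triangleq F(x_i)-F(x_j)-\inner{g_j}{x_i-x_j}\geq0$, using $x_\star$ and $F_\star$ when $i=\star$. Your cross term is then $\inner{x_{k-1}-x_\star}{g_{k-1}-g_k}=C_{\star,k-1}+C_{k-1,k}-C_{\star,k}$. So your decrement contains $-2A_{k-1}C_{\star,k}$, and this term can dominate. Concretely, take $\R^2$, $B=I$, $\alpha_1=\alpha_2=1$, $F(\xi)=\max\{0,\ \xi_1,\ \tfrac12(\xi_1+\xi_2)-\tfrac34\}$, $x_\star=0$, and $x_0=(\tfrac32,2)$. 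Then $x_1=(\tfrac12,2)$ with $g_1=(1,0)$, and $x_2=(0,\tfrac32)$ with $g_2=(\tfrac12,\tfrac12)$; the corresponding affine pieces are active at $x_1$ and $x_2$. This gives
\[
\Phi_0=\Phi_1=\tfrac{25}{4}<\tfrac{29}{4}=\Phi_2 .
\]
Indeed $C_{\star,1}=C_{1,2}=C_{2,1}=0$ while $C_{\star,2}=\tfrac34$, so the decrement equals $-2C_{\star,2}+\Bignorm{g_1-g_2}^2=-\tfrac32+\tfrac12$. Thus even $\Phi_N\leq\Phi_0$ fails at $N=2$. The exact equality $\Phi_1=\Phi_0$ you noticed is a symptom: $\Phi_k$ has no slack to absorb the growth of $C_{\star,k}$. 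Your fallback (adding function-value or inner-product corrections) points in the right direction, but it leaves the correction unspecified, so the decrease is never actually proved.

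The missing idea is to apply \emph{both} corrections at once with $c_k=2A_k$. That is, use as potential exactly the lower bound you already wrote down,
\[
\mathcal V_k\triangleq\Phi_k-2A_kC_{\star,k}=\Bignorm{x_k-x_\star}^2+2A_k\Bigp{F(x_k)-F_\star}+A_k^2\Bignorm{g_k}^2 ,
\]
which is the paper's Lyapunov function. Subtracting $2A_kC_{\star,k}$ turns the offending $-2A_{k-1}C_{\star,k}$ into $+2\alpha_kC_{\star,k}$. For $k\geq2$ one gets
\[
\mathcal V_{k-1}-\mathcal V_k=2\alpha_kC_{\star,k}+\Bigp{2A_{k-1}+\frac{2A_{k-1}^2}{\alpha_k}}C_{k-1,k}+\frac{2A_{k-1}^2}{\alpha_k}C_{k,k-1}+A_{k-1}^2\Bignorm{g_{k-1}-g_k}^2\geq0 .
\]
This uses $\Bignorm{g_{k-1}}^2-\Bignorm{g_k}^2=\Bignorm{g_{k-1}-g_k}^2+\frac{2}{\alpha_k}\Bigp{C_{k-1,k}+C_{k,k-1}}$. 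The base step becomes the inequality $\Bignorm{x_0-x_\star}^2-\mathcal V_1=2\alpha_1C_{\star,1}\geq0$ rather than an equality. Your terminal argument then applies verbatim to $\mathcal V_N$, giving $\mathcal V_N\geq A_N^2\Bignorm{g_N}^2$ and $\mathcal V_N\geq4A_N\Bigp{F(x_N)-F_\star}$.
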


\subsection{Accelerated proximal point saddle gaps}
The second example recovers an operator potential for the accelerated proximal
point method and then
applies it to a saddle subdifferential. Let $\HH_1,\HH_2$ be real Hilbert
spaces, and let $\Phi$ be a saddle function with
$\Phi\Bigp{\cdot,v}$ and $-\Phi\Bigp{u,\cdot}$ closed, proper, and convex. Assume that
the saddle subdifferential
$\mathcal M\Bigp{u,v}=\Bigp{\partial\Phi\Bigp{\cdot,v}\Bigp{u},
\partial\Bigp{-\Phi\Bigp{u,\cdot}}\Bigp{v}}$ is maximally monotone on
$\HH_1\times\HH_2$ and that
$0\in\mathcal M\Bigp{u_\star,v_\star}$. For $\beta>0$, initialize
$x_0=y_0=y_{-1}=\Bigp{u_0,v_0}$ and run the accelerated proximal point method
of \citet{kim2021accelerated},
\begin{equation}
    x_{k+1}=\Bigp{\operatorname{Id}+\beta\mathcal M}^{-1}\Bigp{y_k},
    \qquad
    y_{k+1}=x_{k+1}
    +\frac{k}{k+2}\Bigp{x_{k+1}-x_k}
    -\frac{k}{k+2}\Bigp{x_k-y_{k-1}},
    \tag{APPM}
\end{equation}
where the saddle-gap bound below was conjectured.
\begin{restatable}[APPM saddle-gap bound]{theorem}{appmsaddlegapthm}
\label{thm:appm-saddle-gap}
Writing $x_k=\Bigp{u_k,v_k}$, assume the saddle-gap quantities below are finite
along the generated sequence. Then, for every $N\geq1$,
\[
    \Phi\Bigp{u_N,v_\star}-\Phi\Bigp{u_\star,v_N}
    \leq
    \frac{\Bignorm{u_0-u_\star}^2+\Bignorm{v_0-v_\star}^2}{4\beta N}.
\]
\end{restatable}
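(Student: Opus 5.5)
The plan is to reduce the saddle gap to an operator inner product at the last iterate, and then control that inner product with a Halpern-type Lyapunov function for APPM.

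\textbf{Step 1 (gap to inner product).} Define
\[
g_k\triangleq\frac{y_{k-1}-x_k}{\beta}\in\mathcal M\Bigp{x_k}.
\]
This membership follows from the resolvent step $x_k=\Bigp{\operatorname{Id}+\beta\mathcal M}^{-1}\Bigp{y_{k-1}}$. Write $g_N=\Bigp{g^u,g^v}$. Convexity of $\Phi\Bigp{\cdot,v_N}$ gives
\[
\Phi\Bigp{u_\star,v_N}\geq\Phi\Bigp{u_N,v_N}+\inner{g^u}{u_\star-u_N}.
\]
Concavity of $\Phi\Bigp{u_N,\cdot}$ gives
\[
\Phi\Bigp{u_N,v_\star}\leq\Phi\Bigp{u_N,v_N}-\inner{g^v}{v_\star-v_N}.
\]
Subtracting the first from the second yields
\[
\Phi\Bigp{u_N,v_\star}-\Phi\Bigp{u_\star,v_N}\leq\inner{g_N}{x_N-x_\star}.
\]
This is the only place where the saddle structure is used; everything below concerns only the monotone operator $\mathcal M$. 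This matches the paper's description of recovering an operator potential and then applying it to a saddle subdifferential.

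\textbf{Step 2 (Lyapunov inequality).} Split
\[
\inner{g_N}{x_N-x_\star}=\inner{g_N}{x_N-x_0}+\inner{g_N}{x_0-x_\star}.
\]
By Young's inequality,
\[
\inner{g_N}{x_0-x_\star}\leq\beta N\Bignorm{g_N}^2+\frac{\Bignorm{x_0-x_\star}^2}{4\beta N}.
\]
Since $\Bignorm{x_0-x_\star}^2=\Bignorm{u_0-u_\star}^2+\Bignorm{v_0-v_\star}^2$, it therefore suffices to prove the key inequality
\[
\beta N^2\Bignorm{g_N}^2+N\inner{g_N}{x_N-x_0}\leq0 .
\]
I would prove this by showing that the potential
\[
V_k\triangleq\beta k^2\Bignorm{g_k}^2+k\inner{g_k}{x_k-x_0}
\]
is nonincreasing in $k$ and satisfies $V_1\leq0$. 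For the base case, $y_0=x_0$ gives $x_1-x_0=-\beta g_1$, so $V_1=\beta\Bignorm{g_1}^2-\beta\Bignorm{g_1}^2=0$. The exact weights $\beta k^2$ and $k$ may need adjusting, e.g.\ to $k(k+1)$, to fit the $k/(k+2)$ momentum. I would pin them down from the candidate-lemma SDP output at small $N$, which by construction should return this potential and its decrement.

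\textbf{Step 3 (one-step decrement; main obstacle).} First I would rewrite APPM in Halpern form. Using the definition of $g_k$, show by induction that
\[
y_k=\frac{1}{k+1}x_0+\frac{k}{k+1}\Bigp{x_k-\beta g_k}.
\]
Equivalently, APPM is the Halpern iteration applied to the reflected resolvent, which is Kim's known equivalence. Checking this requires substituting $x_k=y_{k-1}-\beta g_k$ into the APPM momentum update and matching coefficients. With it, $x_{k+1}$ and $x_{k+1}-x_0$ become affine combinations of $x_k-x_0$, $g_k$ and $g_{k+1}$. Then I would verify the identity
\[
V_k-V_{k+1}=c_k\inner{g_{k+1}-g_k}{x_{k+1}-x_k},
\]
with $c_k=k(k+1)$ or similar, possibly plus a nonnegative square. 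Monotonicity of $\mathcal M$ then gives $V_{k+1}\leq V_k$.

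This coefficient-matching identity is where I expect the work, and the risk, to lie. If an exact identity fails with the naive $V_k$, I would first try adding a multiple of $\Bignorm{x_k-x_0}^2$, or a cross term such as $\inner{g_k}{y_{k-1}-x_0}$, to the potential. I would then re-solve the resulting small linear system for the coefficients. Telescoping the decrement from $V_1=0$ gives $V_N\leq0$, which is the key inequality of Step 2, and combining with Steps 1 and 2 finishes the proof.
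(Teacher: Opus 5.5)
Your proposal is correct and is essentially the paper's proof. With $r_k=\beta g_k$ and $z=x_0-x_\star$, one has $\beta V_k+\frac14\|z\|^2=k\inner{x_k-x_\star}{r_k}+\|k r_k-\frac12 z\|^2$, which is exactly the paper's potential, and your Young step is precisely the paper's dropping of the square $\|N r_N-\frac12 z\|^2$. Your weights need no adjustment and no extra terms are required: the decrement is exactly $V_k-V_{k+1}=k(k+1)\inner{g_{k+1}-g_k}{x_{k+1}-x_k}$, starting from $V_1=0$, which the paper verifies using the closed form $y_k=y_0-\frac{2}{k+1}\sum_{\ell=1}^{k}\ell r_\ell$ (equivalent to your Halpern form).
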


%% file: sections/conclusion.tex
\section{Conclusion}\label{sec:conclusion}

This paper treats PEP dual solutions not only as tight worst-case certificates,
but also as objects for searching over simpler proof structures; active multipliers
and residual terms provide tangible complexity measures for deciding when a
computer-generated certificate can be made readable. In the examples,
exhaustive sparsification exposes closed-form GD multiplier patterns, candidate
lemmas recover proximal Lyapunov decrements for residual/value and saddle-gap
bounds, and normalized sparsity heuristics reduce FGM active multipliers,
suggesting a workflow in which computer-aided worst-case analysis is followed
by certificate simplification to guide proof design.

%% file: sections/appendix.tex
\section{General PEP framework}\label{app:general-pep-framework}

This section gives the sampled PEP derivation behind the proof structures used
in the main text. In particular, it explains why first-order PEP dual
certificates naturally take the form of identities built from valid inequalities
evaluated on finitely many sampled points.

\subsection{Sampled-data PEP}\label{app:sampled-data-pep}

Fix a horizon $\hor$ and let $\labels$ be the finite set of point labels used in
the proof identity, as in \Cref{sec:proof-structures}. The sampled data are
\[
  \mathcal D
  =
  \Bigset{\Bigp{x_i,f_i,g_i}}_{i\in\labels},
  \qquad
  f_i=f\Bigp{x_i},\quad g_i\in\partial f\Bigp{x_i}.
\]
When a formulation needs additional method-state variables or auxiliary points,
we append them to $\mathcal D$. Such variables are included in $\labels$ only
when oracle data are sampled there and interpolation inequalities are imposed on
them. Equations used only to define added variables, such as algorithm
recurrences, are substituted away in the sampled expressions.

We use the same initialization and performance functionals as in the main text,
and write them as $\Ic_f\Bigp{\mathcal D}$ and
$\Pc_f\Bigp{\mathcal D}$. For the first-order function classes and algorithms
considered in the main text, we use the following finite hypothesis form.

\begin{assumption}[Finite hypothesis form]\label{ass:appendix-class-form}
  For the fixed method, horizon, and label set, the admissibility requirements
  \[
    f\in\F,\qquad
    x_\star\in\argmin_{x\in\R^d} f\Bigp{x},\qquad
    \Bigp{x_k}_{k\in\llbracket0,\hor\rrbracket}
    \in\mathcal T_{\A}\Bigp{f,\hor,x_0}
  \]
  are equivalently represented at the sampled-data level by a finite family of
  scalar hypotheses
  \[
    H^{\Bigp{\ell}}\Bigp{x_i,x_j}\geq0,\qquad
    \forall i,j\in\labels,\quad
    \ell\in\KK_{i,j}.
  \]
\end{assumption}

This is the same hypothesis notation as in \Cref{sec:proof-structures}. Adding
valid redundant hypotheses can change the certificate representation but not the
exact PEP; omitting class-defining interpolation inequalities or required
algorithmic constraints gives a relaxation.

With this notation, the introductory worst-case problem~\eqref{eq:intro-pep}
admits the following finite sampled-data form:
\begin{equation}
  \begin{aligned}
    \problemhead{maximize}{\mathcal D,\ d\geq1}
    &
    \Pc_f\Bigp{\mathcal D} \\
    \problemsubjectto
    & \Ic_f\Bigp{\mathcal D}\leq1,\\
    & H^{\Bigp{\ell}}\Bigp{x_i,x_j}\geq0,\quad
    \forall i,j\in\labels,\quad
    \ell\in\KK_{i,j}.
  \end{aligned}
  \tag{f-PEP}
  \label{eq:f-pep}
\end{equation}

\subsection{Lifted SDP and Lagrangian dual}\label{app:indexed-lifted-sdp-dual}

The sampled PEP above is finite, but it is still written in terms of vectors in
an arbitrary dimension. In the examples considered here, every scalar quantity
depends on those vectors only through their inner products, together with the
sampled function values. We therefore use the standard SDP lift: function values
are collected as free scalar coordinates, and inner products are collected in a
positive semidefinite Gram matrix. The following assumption records the finite
lifted form used below.

\begin{assumption}[Gram representability]
  \label{ass:appendix-finite-lifted-representation}
  For the fixed horizon and label set $\labels$, the sampled quantities can be
  encoded by scalar coordinates $s\in\R^p$ and a Gram matrix $G\succeq0$ such
  that the lifted expressions
  \[
    P\Bigp{s,G}
    \triangleq
    \inner{s}{q_P}+\tr\Bigp{GQ_P},
    \qquad
    I\Bigp{s,G}
    \triangleq
    \inner{s}{q_I}+\tr\Bigp{GQ_I},
  \]
  represent $\Pc_f\Bigp{\mathcal D}$ and
  $\Ic_f\Bigp{\mathcal D}$, respectively, and, for every
  $i,j\in\labels$ and $\ell\in\KK_{i,j}$, the lifted
  expression
  \[
    H^{\Bigp{\ell}}_{i,j}\Bigp{s,G}
    \triangleq
    \inner{s}{q^{\Bigp{\ell}}_{i,j}}+\tr\Bigp{GQ^{\Bigp{\ell}}_{i,j}}
  \]
  represents the sampled hypothesis
  $H^{\Bigp{\ell}}\Bigp{x_i,x_j}$.
\end{assumption}

Under \Cref{ass:appendix-finite-lifted-representation}, the
lifted SDP is
\begin{equation}
  \begin{aligned}
    \problemhead{maximize}{s\in\R^p,\ G\succeq0}
    &
    P\Bigp{s,G} \\
    \problemsubjectto
    & I\Bigp{s,G}\leq1,\\
    & H^{\Bigp{\ell}}_{i,j}\Bigp{s,G}\geq0,
    \qquad \forall i,j\in\labels,\quad
    \ell\in\KK_{i,j}.
  \end{aligned}
  \tag{SDP-PEP}
  \label{eq:indexed-sdp-pep}
\end{equation}

The Lagrangian dual of this reduced SDP is
\begin{equation}
  \begin{aligned}
    \problemhead{minimize}{\substack{\tau\geq0,\\
    \lambda^{\Bigp{\ell}}_{i,j}\geq0,\quad
    i,j\in\labels,\quad
    \ell\in\KK_{i,j}}}
    &
    \tau \\
    \problemsubjectto
    & \tau q_I-q_P
    -\sum_{i,j\in\labels}\sum_{\ell\in\KK_{i,j}}
    \lambda^{\Bigp{\ell}}_{i,j}q^{\Bigp{\ell}}_{i,j}=0,\\
    & \tau Q_I-Q_P
    -\sum_{i,j\in\labels}\sum_{\ell\in\KK_{i,j}}
    \lambda^{\Bigp{\ell}}_{i,j}Q^{\Bigp{\ell}}_{i,j}\succeq0.
  \end{aligned}
  \tag{D-SDP-PEP}
  \label{eq:indexed-d-sdp-pep}
\end{equation}
Feasibility of this dual gives a proof identity of the form used in
\eqref{eq:generic-proof}. Indeed, if
\[
  R
  =
  \tau Q_I-Q_P
  -\sum_{i,j\in\labels}\sum_{\ell\in\KK_{i,j}}
  \lambda^{\Bigp{\ell}}_{i,j}Q^{\Bigp{\ell}}_{i,j}\succeq0,
\]
then the scalar equality gives
\[
  \tau I\Bigp{s,G}
  -P\Bigp{s,G}
  =
  \sum_{i,j\in\labels}\sum_{\ell\in\KK_{i,j}}
  \lambda^{\Bigp{\ell}}_{i,j}H^{\Bigp{\ell}}_{i,j}\Bigp{s,G}
  +\tr\Bigp{GR}.
\]
Evaluating this lifted identity on sampled data recovers the corresponding
identity with $\Ic_f$, $\Pc_f$, and
$H^{\Bigp{\ell}}\Bigp{x_i,x_j}$.
The PSD slack decomposition of $\tr\Bigp{GR}$ gives the residual terms $R_i$.

\section{An SDP formulation for candidate-lemma search}
\label{app:candidate-lemma-sdp-formulation}

This appendix derives the SDP formulation used to search for candidate lemmas.
The formulation is obtained from \eqref{eq:indexed-d-sdp-pep}.

We use the lifted notation $H^{\Bigp{\ell}}_{i,j}\Bigp{s,G}$,
$I\Bigp{s,G}$, and $P\Bigp{s,G}$ from
\Cref{ass:appendix-finite-lifted-representation}. As in \Cref{sec:automatic},
define
\[
  \mathcal J
  \triangleq
  \Bigset{(i,j,\ell): i,j\in\labels,\ 
  \ell\in\KK_{i,j}}.
\]
For $h=(i,j,\ell)\in\mathcal J$, write
\[
  H_h\Bigp{s,G}\triangleq H^{\Bigp{\ell}}_{i,j}\Bigp{s,G},
  \qquad
  q_h\triangleq q^{\Bigp{\ell}}_{i,j},
  \qquad
  Q_h\triangleq Q^{\Bigp{\ell}}_{i,j}.
\]
Let $\mathcal L_{\mathrm{cand}}$ be the candidate-lemma index set. For each
$\kappa\in\mathcal L_{\mathrm{cand}}$, let
$\mathcal I_\kappa\subseteq\mathcal J$ be the hypotheses that may appear in
that candidate lemma.
For fixed coefficients and residual
matrix, a candidate lemma has the lifted analogue of the proof identities in
the main text:
\[
  \Psi_\kappa\Bigp{s,G}
  \triangleq
  \sum_{h\in\mathcal I_\kappa}
  \alpha_h^{\Bigp{\kappa}}H_h\Bigp{s,G}
  +\tr\Bigp{GR_\kappa}
  \geq0,
\]
where
\[
  \alpha_h^{\Bigp{\kappa}}\geq0,\qquad
  R_\kappa\succeq0.
\]
Thus $\Psi_\kappa\geq0$ is not a new assumption: it is certified by the original
hypotheses together with the nonnegative residual term $\tr\Bigp{GR_\kappa}$.
The PSD matrix $R_\kappa$ is the lifted representation of the residual part; a
PSD decomposition of $\tr\Bigp{GR_\kappa}$ gives residual squares of the kind
denoted by $R_i$ in \eqref{eq:generic-proof}.
Equivalently, if
$\Psi_\kappa\Bigp{s,G}=\inner{s}{q_{\Psi_\kappa}}+\tr\Bigp{GQ_{\Psi_\kappa}}$, then
\[
  q_{\Psi_\kappa}
  =
  \sum_{h\in\mathcal I_\kappa}
  \alpha_h^{\Bigp{\kappa}}q_h,
  \qquad
  Q_{\Psi_\kappa}
  =
  \sum_{h\in\mathcal I_\kappa}
  \alpha_h^{\Bigp{\kappa}}Q_h+R_\kappa.
\]

If the candidate lemmas $\Psi_\kappa\geq0$ are fixed and added to
\eqref{eq:indexed-sdp-pep}, the augmented Lagrangian dual has multipliers
$\tau\geq0$, $\lambda_h\geq0$, and $\eta_\kappa\geq0$, with constraints
\[
  \tau q_I-q_P
  -\sum_{h\in\mathcal J}\lambda_hq_h
  -\sum_{\kappa\in\mathcal L_{\mathrm{cand}}}
  \eta_\kappa q_{\Psi_\kappa}
  =0,
\]
\[
  R
  \triangleq
  \tau Q_I-Q_P
  -\sum_{h\in\mathcal J}\lambda_hQ_h
  -\sum_{\kappa\in\mathcal L_{\mathrm{cand}}}
  \eta_\kappa Q_{\Psi_\kappa}
  \succeq0.
\]
Here $R$ denotes the aggregate certificate slack matrix, distinct from the
per-slot residual matrices $R_\kappa$ and $\widehat R_\kappa$.
Searching over candidate lemmas would introduce products
$\eta_\kappa\alpha_h^{\Bigp{\kappa}}$ and $\eta_\kappa R_\kappa$. Absorb
them by setting
\[
  \zeta_h^{\Bigp{\kappa}}
  =\eta_\kappa\alpha_h^{\Bigp{\kappa}},\qquad
  \widehat R_\kappa=\eta_\kappa R_\kappa.
\]
The absorbed variables satisfy
\[
  \zeta_h^{\Bigp{\kappa}}\in\R_+,\qquad
  \widehat R_\kappa\succeq0.
\]
Thus the target-restricted SDP feasible set used for candidate-lemma search,
with the aggregate slack written after absorption, is
\begin{equation}
  \begin{aligned}
    \problemfind
    & \tau,\quad \{\lambda_h\},\quad
    \{\zeta_h^{\Bigp{\kappa}}\},\quad
    \{\widehat R_\kappa\}\\
    \problemsubjectto
    & 0\leq\tau\leq\bar\rho,\\
    & \lambda_h\geq0,\quad
    \forall h\in\mathcal J,\\
    & \zeta_h^{\Bigp{\kappa}}\in\R_+,
    \quad \forall \kappa\in\mathcal L_{\mathrm{cand}},\quad
    h\in\mathcal I_\kappa,\\
    & \widehat R_\kappa\succeq0,
    \quad \forall \kappa\in\mathcal L_{\mathrm{cand}},\\
    &
    \begin{aligned}[t]
      0={}&
      \tau q_I-q_P
      -\sum_{h\in\mathcal J}\lambda_hq_h
      -\sum_{\kappa\in\mathcal L_{\mathrm{cand}}}
      \sum_{h\in\mathcal I_\kappa}
      \zeta_h^{\Bigp{\kappa}}q_h,
    \end{aligned}\\
    &
    \begin{aligned}[t]
      R\triangleq{}&
      \tau Q_I-Q_P
      -\sum_{h\in\mathcal J}\lambda_hQ_h\\
      &-\sum_{\kappa\in\mathcal L_{\mathrm{cand}}}
      \Bigp{
        \sum_{h\in\mathcal I_\kappa}
        \zeta_h^{\Bigp{\kappa}}Q_h
        +\widehat R_\kappa
      }
      \succeq0.
    \end{aligned}
  \end{aligned}
  \tag{CL-SDP}
  \label{eq:candidate-lemma-sdp}
\end{equation}
One can encourage low residual complexity by adding rank-minimization
heuristics on the slack $R$, such as log-det objectives
\citep{fazel2003log}, while keeping the relaxed target $\bar\rho$ fixed.

After solving \eqref{eq:candidate-lemma-sdp}, recover the candidate-lemma
multiplier in slot $\kappa$ by setting
\[
  \eta_\kappa
  \triangleq
  \sum_{h\in\mathcal I_\kappa}
  \zeta_h^{\Bigp{\kappa}}.
\]
If $\eta_\kappa>0$, use the normalization
\[
  \alpha_h^{\Bigp{\kappa}}
  =
  \eta_\kappa^{-1}\zeta_h^{\Bigp{\kappa}},
  \qquad
  R_\kappa
  =
  \eta_\kappa^{-1}\widehat R_\kappa.
\]
With these recovered coefficients, $\Psi_\kappa\Bigp{s,G}$ has the form
displayed above and appears in the proof identity with multiplier $\eta_\kappa$.
If $\eta_\kappa=0$, then
$\zeta_h^{\Bigp{\kappa}}=0$ for every
$h\in\mathcal I_\kappa$, so that slot carries only residual slack.
Define
\[
  \bar R
  \triangleq
  R
  +\sum_{\kappa\in\mathcal L_{\mathrm{cand}}:\ \eta_\kappa=0}
  \widehat R_\kappa
  \succeq0.
\]
Every feasible point of \eqref{eq:candidate-lemma-sdp} yields the proof
identity
\[
  \tau I\Bigp{s,G}-P\Bigp{s,G}
  =
  \sum_{h\in\mathcal J}\lambda_hH_h\Bigp{s,G}
  +\sum_{\kappa\in\mathcal L_{\mathrm{cand}}:\ \eta_\kappa>0}
  \eta_\kappa\Psi_\kappa\Bigp{s,G}
  +\tr\Bigp{G\bar R}
  \geq0.
\]
Expanding each selected $\Psi_\kappa$ recovers a certificate using only the
original hypotheses and PSD residual terms, while keeping the selected
candidate lemmas visible produces the intermediate-lemma proof identity used in
\Cref{sec:automatic}.

\section{Detailed PEP formulations for experimental
examples}\label{app:example-peps}

This section instantiates the sampled-data notation of
\Cref{app:general-pep-framework} for the two examples in the main text and
writes the concrete lifted SDP formulations used in the certificate searches.
Each finite PEP fixes a label set $\labels$, sampled data
$\mathcal D=\Bigset{\Bigp{x_i,f_i,g_i}}_{i\in\labels}$, and concrete indexed
hypotheses. In the examples below, each active ordered pair carries a single
interpolation hypothesis, denoted $H\Bigp{x_i,x_j}\geq0$.

\subsection{One-step gradient descent}\label{app:pep-gd}

Let $N=1$ and $x_1=x_0-\gamma\nabla f\Bigp{x_0}$. The class is
$\F_{\mu,L}$ with $0\leq\mu<L$, the initialization constraint is on
$\Ic_f=f\Bigp{x_0}-f_\star$, and the performance measure is
$\Pc_f=f\Bigp{x_1}-f_\star$.

\paragraph{Functional-residual PEP.}
Set $\labels=\Bigset{0,1,\star}$ and
$\mathcal D=\Bigset{\Bigp{x_i,f_i,g_i}}_{i\in\labels}$, with $g_\star=0$.
The sampled version of \eqref{eq:f-pep} is
\begin{equation}
  \begin{aligned}
    \problemhead{maximize}{\substack{x_0,x_1,x_\star,g_0,g_1\in\R^d,\\
    f_0,f_1,f_\star\in\R,\ d\geq1}}
    &
    f_1-f_\star \\
    \problemsubjectto
    & f_0-f_\star\leq1,\\
    & x_1=x_0-\gamma g_0,\\
    & H_{\mu,L}\Bigp{x_i,x_j}\geq0,
    \qquad i,j\in\Bigset{0,1,\star},\ i\neq j.
  \end{aligned}
  \tag{GD-f-PEP}
  \label{eq:gd-f-pep}
\end{equation}
The omitted self-pairs are only the trivial constraints
$H_{\mu,L}\Bigp{x_i,x_i}\equiv0$. For $i,j\in\labels$,
\[
  H_{\mu,L}\Bigp{x_i,x_j}
  \triangleq
  f_i-f_j-\inner{g_j}{x_i-x_j}
  -\frac{1}{2\Bigp{L-\mu}}
  \Bignorm{g_i-g_j-\mu\Bigp{x_i-x_j}}^2
  -\frac{\mu}{2}\Bignorm{x_i-x_j}^2.
\]

\paragraph{Lifted SDP.}
Introduce
\[
  s=\Bigp{s_0,s_1}=\Bigp{f_0-f_\star,\ f_1-f_\star}.
\]
The Gram matrix is
\begin{equation}
  \label{eq:gd-gram-matrix}
  G=
  \begin{bmatrix}
    \Bignorm{x_0-x_\star}^2 & \inner{g_0}{x_0-x_\star} &
    \inner{g_1}{x_0-x_\star}\\
    \inner{g_0}{x_0-x_\star} & \Bignorm{g_0}^2 & \inner{g_0}{g_1}\\
    \inner{g_1}{x_0-x_\star} & \inner{g_1}{g_0} & \Bignorm{g_1}^2
  \end{bmatrix}
  \in\mathbb S_+^3.
\end{equation}
The update has been substituted through
$x_1-x_\star=\Bigp{x_0-x_\star}-\gamma g_0$. The associated basis selector
vectors in the coordinates of \eqref{eq:gd-gram-matrix} are
\[
  \begin{aligned}
    \widehat{x}_\star&=0,
    &
    \widehat{x}_0&=
    \begin{bmatrix}1\\0\\0
    \end{bmatrix},
    &
    \widehat{x}_1&=
    \begin{bmatrix}1\\-\gamma\\0
    \end{bmatrix},\\
    \widehat{g}_\star&=0,
    &
    \widehat{g}_0&=
    \begin{bmatrix}0\\1\\0
    \end{bmatrix},
    &
    \widehat{g}_1&=
    \begin{bmatrix}0\\0\\1
    \end{bmatrix}.
  \end{aligned}
\]
Here the hats denote coordinate selector vectors in $\R^3$, not new sampled
points; they select $x_i-x_\star$ and $g_i$ in the ordered coordinates
underlying $G$. Let
$e_0,e_1$ be the canonical basis of $\R^2$, set
$q_I=e_0$, $q_P=e_1$, $e_\star=0$, and
$q_{i,j}=e_i-e_j$. Define $Q_{i,j}\in\mathbb S^3$ by
\[
  \tr\Bigp{GQ_{i,j}}
  =
  -\inner{g_j}{x_i-x_j}
  -\frac{1}{2\Bigp{L-\mu}}
  \Bignorm{g_i-g_j-\mu\Bigp{x_i-x_j}}^2
  -\frac{\mu}{2}\Bignorm{x_i-x_j}^2.
\]
Equivalently, the interpolation residuals are represented as
\[
  H_{\mu,L}\Bigp{x_i,x_j}
  =
  \inner{s}{q_{i,j}}+\tr\Bigp{GQ_{i,j}}.
\]
Thus the reduced lifted problem is
\begin{equation}
  \begin{aligned}
    \problemhead{maximize}{s\in\R^2,\ G\in\mathbb S_+^3}
    &
    \inner{s}{q_P} \\
    \problemsubjectto
    & \inner{s}{q_I}\leq1,\\
    & \inner{s}{q_{i,j}}+\tr\Bigp{GQ_{i,j}}\geq0,
    \qquad i,j\in\Bigset{0,1,\star},\ i\neq j.
  \end{aligned}
  \tag{GD-SDP-PEP}
  \label{eq:gd-sdp-pep}
\end{equation}

\paragraph{Lagrangian dual certificate.}
The reduced Lagrangian dual is
\begin{equation}
  \begin{aligned}
    \problemhead{minimize}{\substack{\tau\geq0,\\
    \lambda_{i,j}\geq0,\ i,j\in\Bigset{0,1,\star},\ i\neq j}}
    &
    \tau \\
    \problemsubjectto
    & \tau q_I-q_P-
    \displaystyle\sum_{\substack{i,j\in\Bigset{0,1,\star}\\i\neq j}}
    \lambda_{i,j}q_{i,j}=0,\\
    & -\displaystyle\sum_{\substack{i,j\in\Bigset{0,1,\star}\\i\neq j}}
    \lambda_{i,j}Q_{i,j}\succeq0.
  \end{aligned}
  \tag{GD-D-SDP-PEP}
  \label{eq:gd-d-sdp-pep}
\end{equation}

\subsection{Candidate-lemma SDP for GD}
\label{app:gd-candidate-lemma-sdp}

The generic candidate-lemma SDP in
\Cref{app:candidate-lemma-sdp-formulation} describes a large family of possible
derived inequalities. To obtain a simple and interpretable GD proof,
we constrain the space of allowable lemmas before solving the SDP.
Begin by fixing $\tau=\bar\rho=\rho_\gamma$ from
\eqref{eq:gd-certificate-ingredients}. We also fix the multiplier support to
$(\star,0)$, $(\star,1)$, and $(0,1)$, as discovered using exhaustive
sparsification, and fix their values to the same closed-form multipliers used
in the classical proof for GD, given in \eqref{eq:gd-certificate-ingredients}.

We now restrict the lemmas to the space of
interpolation constraints
over \emph{weaker} regularity parameters $0\leq a<b$, such that $a
\leq \mu$ and $L \leq b$. For a pair of sampled points, write the interpolation
inequality with lower curvature $a$ and upper curvature $b$ as
\[
  \begin{aligned}
    H_{a,b}\Bigp{x_i,x_j}
    &=
    f_i-f_j-\inner{g_j}{x_i-x_j}
    -\frac{a}{2}\Bignorm{x_i-x_j}^2\\
    &\quad
    -\frac{1}{2\Bigp{b-a}}
    \Bignorm{g_i-g_j-a\Bigp{x_i-x_j}}^2
    \geq0.
  \end{aligned}
\]
Grouping by Gram terms, we get the following expression
\begin{equation}
  \begin{aligned}
    H_{a,b}\Bigp{x_i,x_j}
    &=
    f_i-f_j-\inner{g_j}{x_i-x_j}
    -\frac{ab}{2\Bigp{b-a}}\Bignorm{x_i-x_j}^2\\
    &\quad
    +\frac{a}{b-a}\inner{x_i-x_j}{g_i-g_j}
    -\frac{1}{2\Bigp{b-a}}\Bignorm{g_i-g_j}^2.
  \end{aligned}
  \label{eq:gd-ab-interpolation-expanded}
\end{equation}
Let $\operatorname{sym}(M)=\frac12\Bigp{M+M^\top}$, and let
$Q_{i,j}^{a,b}$ denote the matrix satisfying
\[
  H_{a,b}\Bigp{x_i,x_j}
  =
  \inner{s}{q_{i,j}}+\tr\Bigp{G Q_{i,j}^{a,b}}.
\]
Using the basis selector vectors from \eqref{eq:gd-gram-matrix},
\[
  \begin{aligned}
    Q_{i,j}^{a,b}
    &=
    -\operatorname{sym}\Bigp{\widehat{g}_j
    \Bigp{\widehat{x}_i-\widehat{x}_j}^\top}
    -\frac{ab}{2\Bigp{b-a}}
    \Bigp{\widehat{x}_i-\widehat{x}_j}
    \Bigp{\widehat{x}_i-\widehat{x}_j}^\top\\
    &\quad
    +\frac{a}{b-a}
    \operatorname{sym}\Bigp{\Bigp{\widehat{x}_i-\widehat{x}_j}
    \Bigp{\widehat{g}_i-\widehat{g}_j}^\top}
    -\frac{1}{2\Bigp{b-a}}
    \Bigp{\widehat{g}_i-\widehat{g}_j}
    \Bigp{\widehat{g}_i-\widehat{g}_j}^\top.
  \end{aligned}
\]
This isolates how the regularity parameters affect the terms
of our Gram matrix. We search over them by writing
\[
  \begin{aligned}
    \widehat Q_{i,j}(c)
    &=
    -\operatorname{sym}\Bigp{\widehat{g}_j
    \Bigp{\widehat{x}_i-\widehat{x}_j}^\top}
    -c_{xx}\Bigp{\widehat{x}_i-\widehat{x}_j}
    \Bigp{\widehat{x}_i-\widehat{x}_j}^\top\\
    &\quad
    -c_{xg}\operatorname{sym}\Bigp{\Bigp{\widehat{x}_i-\widehat{x}_j}
    \Bigp{\widehat{g}_i-\widehat{g}_j}^\top}
    -c_{gg}\Bigp{\widehat{g}_i-\widehat{g}_j}
    \Bigp{\widehat{g}_i-\widehat{g}_j}^\top,
  \end{aligned}
\]
where $c\triangleq(c_{xx},c_{xg},c_{gg})$.
To extract the values of the regularity parameters, compare the
expression for $\widehat Q_{i,j}(c)$ with $Q_{i,j}^{a,b}$. If $c_{gg}>0$, the
following identities allow us to extract them:
\[
  a=-\frac{c_{xg}}{2c_{gg}},
  \qquad
  b=a+\frac{1}{2c_{gg}}.
\]
The remaining coefficient satisfies
$c_{xx}=ab\,c_{gg}$ when the candidate is exactly an interpolation inequality
for $\F_{a,b}$.
Let $Q_{i,j}^{\mu,L}$ be the corresponding Gram matrix for the original class
$\F_{\mu,L}$. The following SDP searches over the coefficients in $c$ with
these multipliers held fixed:
\begin{equation}
  \begin{aligned}
    \problemhead{minimize}{c_{xx},c_{xg},c_{gg}}
    &
    \tr\Bigp{R(c)}\\
    \problemsubjectto
    & c_{xx}\geq0,\qquad c_{xg}\leq0,\qquad c_{gg}\geq0,\\
    & \widehat Q_{i,j}(c)-Q_{i,j}^{\mu,L}\succeq0,
    \qquad
    (i,j)\in
    \Bigset{\Bigp{\star,0},\Bigp{\star,1},\Bigp{0,1}},\\
    & R(c)\triangleq
    -\lambda_{\star,0}\widehat Q_{\star,0}(c)
    -\lambda_{\star,1}\widehat Q_{\star,1}(c)
    -\lambda_{0,1}\widehat Q_{0,1}(c)
    \succeq0.
  \end{aligned}
  \tag{GD-CL-SDP}
  \label{eq:gd-candidate-lemma-sdp}
\end{equation}
For a coefficient vector satisfying this relation, the LMI
$\widehat Q_{i,j}(c)-Q_{i,j}^{\mu,L}\succeq0$ certifies
validity by enforcing $H_{a,b}\Bigp{x_i,x_j}\geq
H_{\mu,L}\Bigp{x_i,x_j} \geq 0$ in primal space.
The last line is
the residual slack in the certificate, and minimizing
$\tr\Bigp{R(c)}$ heuristically selects a
low-rank residual.
The SDP solutions in \Cref{fig:gd-fitted-curvature-scan}
do indeed form valid interpolation constraints, and the extracted
fitted constants
in \eqref{eq:gd-fitted-curvatures} are
$\mu_\gamma=a$ and $L_\gamma=b$.

\subsection{Smooth-convex fast gradient method}\label{app:pep-fgm-cvx}

For a horizon $N$, consider the smooth-convex FGM recurrence
\[
  y_0=x_0,\qquad
  x_{k+1}=y_k-\frac1L\nabla f\Bigp{y_k},\qquad
  y_{k+1}=x_{k+1}+\frac{k}{k+3}\Bigp{x_{k+1}-x_k},
\]
with the $x$-update imposed for $k\in\llbracket0,N-1\rrbracket$ and the
$y$-update imposed for $k\in\llbracket0,N-2\rrbracket$. The class is $\F_L$,
the initialization constraint is $\Ic_f=\Bignorm{x_0-x_\star}^2$, and
the performance measure is
$\Pc_f=f\Bigp{x_N}-f_\star$.

\paragraph{Function-value PEP.}
Set
\[
  \labels_N=\llbracket 0,N \rrbracket\cup\Bigset{\star},
  \qquad
  \bar x_i=y_i\quad i\in\llbracket 0,N-1 \rrbracket,\qquad
  \bar x_N=x_N,\qquad
  \bar x_\star=x_\star.
\]
The sampled oracle data are
\[
  \mathcal D_N
  =
  \Bigset{\Bigp{\bar x_i,f_i,g_i}}_{i\in\labels_N},
  \qquad
  f_i=f\Bigp{\bar x_i},\quad g_i=\nabla f\Bigp{\bar x_i},
  \quad g_\star=0.
\]
Then, writing $x_{0:N}=\Bigp{x_0,\dots,x_N}$ and
$y_{0:N-1}=\Bigp{y_0,\dots,y_{N-1}}$,
\begin{equation}
  \begin{aligned}
    \problemhead{maximize}{\mathcal D_N,\ x_{0:N},\ y_{0:N-1},
    x_\star,\ d\geq1}
    &
    f_N-f_\star \\
    \problemsubjectto
    & \Bignorm{x_0-x_\star}^2\leq1,\\
    & y_0=x_0,\\
    & x_{k+1}=y_k-\frac1L g_k,
    \qquad k\in\llbracket 0,N-1 \rrbracket,\\
    & y_k=x_k+\frac{k-1}{k+2}\Bigp{x_k-x_{k-1}},
    \qquad k\in\llbracket 1,N-1 \rrbracket,\\
    & H_L\Bigp{\bar x_i,\bar x_j}\geq0,
    \qquad i,j\in\labels_N,\ i\neq j.
  \end{aligned}
  \tag{FGM-f-PEP}
  \label{eq:fgm-cvx-f-pep}
\end{equation}
Here the concrete smooth-convex interpolation hypothesis is the main-text
inequality
\[
  H_L\Bigp{\bar x_i,\bar x_j}
  \triangleq
  f_i-f_j-\inner{g_j}{\bar x_i-\bar x_j}
  -\frac{1}{2L}\Bignorm{g_i-g_j}^2.
\]

\paragraph{Lifted SDP.}
Use label-based scalar coordinates
\[
  s=\Bigp{f_0-f_\star,\dots,f_N-f_\star}\in\R^{N+1},
\]
and let $\operatorname{Gram}\Bigp{v_1,\ldots,v_k}$ denote the matrix with
entries $\inner{v_i}{v_j}$. Set
\[
  G=
  \operatorname{Gram}\Bigp{x_0-x_\star,\dots,x_N-x_\star,
  y_0-x_\star,\dots,y_{N-1}-x_\star,\ g_0,\dots,g_N}
  \in\mathbb S_+^{3N+2}.
\]
Let $e_0,\dots,e_N$ be the canonical basis of $\R^{N+1}$, set
$e_\star=0$, $q_P=e_N$, $q_I=0$, $Q_I=E_0$, and
$q_{i\to j}=e_i-e_j$. We write the smooth-convex interpolation inequalities as
the concrete linear forms
\[
  H_L\Bigp{\bar x_i,\bar x_j}
  =
  \inner{s}{q_{i\to j}}+\tr\Bigp{GQ_{i\to j}}
  \geq0,\qquad i,j\in\labels_N,\ i\neq j,
\]
where $Q_{i\to j}\in\mathbb S^{3N+2}$ is chosen so that
\[
  \tr\Bigp{GQ_{i\to j}}
  =
  -\inner{g_j}{\bar x_i-\bar x_j}
  -\frac{1}{2L}\Bignorm{g_i-g_j}^2.
\]
The lifted problem is
\begin{equation}
  \begin{aligned}
    \problemhead{maximize}{s\in\R^{N+1},\ G\in\mathbb S_+^{3N+2}}
    &
    \inner{s}{q_P} \\
    \problemsubjectto
    & \tr\Bigp{GQ_I}\leq1,\\
    & \tr\Bigp{M_kG}=0,\qquad \forall k\in\llbracket 0,N-1 \rrbracket,\\
    & \tr\Bigp{N_0G}=0,\\
    & \tr\Bigp{N_kG}=0,\qquad \forall k\in\llbracket 1,N-1 \rrbracket,\\
    & H_L\Bigp{\bar x_i,\bar x_j}\geq0,
    \qquad \forall i,j\in\labels_N:\ i\neq j.
  \end{aligned}
  \tag{FGM-SDP-PEP}
  \label{eq:fgm-cvx-sdp-pep}
\end{equation}
The coefficient matrices encode
\[
  \tr\Bigp{GQ_I}=\Bignorm{x_0-x_\star}^2,\qquad
  \tr\Bigp{M_kG}=\Bignorm{x_{k+1}-y_k+\frac1L g_k}^2,
\]
\[
  \tr\Bigp{N_0G}=\Bignorm{y_0-x_0}^2,
  \qquad
  \tr\Bigp{N_kG}=
  \Bignorm{y_k-x_k-\frac{k-1}{k+2}\Bigp{x_k-x_{k-1}}}^2,
\]
while the matrices $Q_{i\to j}$ encode the Gram part of the interpolation
hypotheses.

\paragraph{Lagrangian dual certificate.}
The Lagrangian dual, with free multipliers for the method equalities, is
\begin{equation}
  \begin{aligned}
    \problemhead{minimize}{\substack{\tau\geq0,\ \eta_k\in\R,\ k\in\llbracket
        0,N-1 \rrbracket,\\
        \zeta_k\in\R,\ k\in\llbracket 0,N-1 \rrbracket,\\
    \lambda_{i\to j}\geq0,\ i,j\in\labels_N,\ i\neq j}}
    &
    \tau \\
    \problemsubjectto
    & \tau q_I-q_P-
    \displaystyle\sum_{\substack{i,j\in\labels_N\\i\neq j}}
    \lambda_{i\to j}q_{i\to j}=0,\\
    &
    \begin{aligned}[t]
      \tau Q_I
      &+\displaystyle\sum_{k\in\llbracket 0,N-1 \rrbracket}\eta_kM_k
      +\displaystyle\sum_{k\in\llbracket 0,N-1 \rrbracket}\zeta_kN_k\\
      &-\displaystyle\sum_{\substack{i,j\in\labels_N\\i\neq j}}
      \lambda_{i\to j}Q_{i\to j}\succeq0.
    \end{aligned}
  \end{aligned}
  \tag{FGM-D-SDP-PEP}
  \label{eq:fgm-cvx-d-sdp-pep}
\end{equation}
The accompanying FGM example code eliminates the recurrence equalities before
solving; this produces an equivalent reduced Lagrangian dual with the
same interpolation
multiplier labels $\lambda_{i\to j}$.

\section{Original-curvature residuals for gradient descent}
\label{app:gd-original-residuals}

The comparison in \Cref{rem:gd-original-curvatures} uses the residual
obtained by specializing the branchwise slack decomposition of
\citet[Theorem~3.3]{taylor2018proximal} to $h\equiv0$. Let
\[
  \begin{gathered}
    \alpha_\gamma^-
    \triangleq
    2L\Bigp{2-\gamma\mu}
    -\mu\Bigp{2-\gamma\mu}^2
    -\gamma^2L^2\mu,
    \qquad
    \beta_\gamma^-\triangleq2-\gamma\Bigp{L+\mu},
    \\
    \alpha_\gamma^+
    \triangleq
    \gamma L\Bigp{L^2+\mu^2}
    -2\Bigp{L^2-L\mu+\mu^2},
    \qquad
    \beta_\gamma^+\triangleq\gamma\Bigp{L+\mu}-2.
  \end{gathered}
\]
Then
\begin{equation}
  R_\gamma'
  \triangleq
  \begin{cases}
    R_\gamma^-,
    & 0<\gamma\leq\frac{2}{L+\mu},\\[1mm]
    R_\gamma^+,
    & \frac{2}{L+\mu}\leq\gamma\leq\frac{2}{L},
  \end{cases}
  \label{eq:gd-original-residuals}
\end{equation}
where
\[
  \begin{aligned}
    R_\gamma^-
    &\triangleq
    \frac{\Bigp{2-\gamma\mu}\beta_\gamma^-}{2\alpha_\gamma^-}
    \Bignorm{
      \Bigp{1-\gamma\mu}\nabla f\Bigp{x_0}-\nabla f\Bigp{x_1}
    }^2
    \\
    &\quad
    +\frac{\gamma L\mu^2\Bigp{2-\gamma\mu}}{2\Bigp{L-\mu}}
    \Bignorm{
      x_0-x_\star
      -\frac{\nabla f\Bigp{x_0}+\nabla f\Bigp{x_1}}
      {\mu\Bigp{2-\gamma\mu}}
    }^2
    \\
    &\quad
    +\frac{\gamma\mu\alpha_\gamma^-}
    {2L\Bigp{L-\mu}\Bigp{2-\gamma\mu}}
    \Bignorm{
      \frac{\Bigp{\gamma\mu-1}L\beta_\gamma^-}{\alpha_\gamma^-}
      \nabla f\Bigp{x_0}
      +\frac{L\beta_\gamma^-}{\alpha_\gamma^-}
      \nabla f\Bigp{x_1}
    }^2,
    \\
    R_\gamma^+
    &\triangleq
    \frac{\Bigp{2-\gamma L}\beta_\gamma^+}{2\gamma\alpha_\gamma^+}
    \Bignorm{
      \Bigp{1-\gamma L}\nabla f\Bigp{x_0}-\nabla f\Bigp{x_1}
    }^2
    \\
    &\quad
    +\frac{\gamma L^2\mu\Bigp{2-\gamma L}}{2\Bigp{L-\mu}}
    \Bignorm{
      x_0-x_\star
      +\frac{1-\gamma L-\gamma\mu}{\gamma L\mu}
      \nabla f\Bigp{x_0}
      -\frac{1}{\gamma L\mu}\nabla f\Bigp{x_1}
    }^2
    \\
    &\quad
    +\frac{\gamma\alpha_\gamma^+}{2\mu\Bigp{L-\mu}}
    \Bignorm{
      \frac{\Bigp{\gamma L-1}L\beta_\gamma^+}{\gamma\alpha_\gamma^+}
      \nabla f\Bigp{x_0}
      +\frac{L\beta_\gamma^+}{\gamma\alpha_\gamma^+}
      \nabla f\Bigp{x_1}
    }^2 .
  \end{aligned}
\]

\section{Proximal certificate proofs}
\label{app:proximal-certificate-proofs}

\subsection{Proximal point residual certificate}
\label{app:ppa-residual-proof}

\pparesidualvaluethm*

\begin{proof}[Proof of \Cref{thm:ppa-residual-value}]
  Set $A_0=0$ and $A_k=\sum_{i=1}^k\alpha_i$. For sampled points define the
  convexity gaps
  \[
    C_{i,j}
    \triangleq
    F\Bigp{x_i}-F\Bigp{x_j}-\inner{g_j}{x_i-x_j},
    \qquad
    C_{\star,j}
    \triangleq
    F_\star-F\Bigp{x_j}-\inner{g_j}{x_\star-x_j}.
  \]
  Since $g_j\in\partial F\Bigp{x_j}$, all these quantities are
  nonnegative. Define
  \[
    \mathcal V_k
    \triangleq
    \Bignorm{x_k-x_\star}_B^2
    +2A_k\Bigp{F\Bigp{x_k}-F_\star}
    +A_k^2\Bignorm{g_k}_{B^{-1}}^2 .
  \]
  Using $x_0-x_1=\alpha_1B^{-1}g_1$,
  \[
    \Bignorm{x_0-x_\star}_B^2-\mathcal V_1
    =
    2\alpha_1
    \Bigp{\inner{g_1}{x_1-x_\star}-F\Bigp{x_1}+F_\star}
    =
    2\alpha_1 C_{\star,1}
    \geq0.
  \]
  Thus $\mathcal V_1\leq\Bignorm{x_0-x_\star}_B^2$.

  For the decrement, fix $k\in\llbracket1,N-1\rrbracket$ and set
  $A=A_k$, $a=\alpha_{k+1}$, and $h=g_{k+1}$. From
  $x_k-x_{k+1}=aB^{-1}h$,
  \[
    \Bignorm{x_k-x_\star}_B^2-\Bignorm{x_{k+1}-x_\star}_B^2
    =
    2a\inner{h}{x_{k+1}-x_\star}
    +a^2\Bignorm{h}_{B^{-1}}^2
  \]
  and
  \[
    F\Bigp{x_k}-F\Bigp{x_{k+1}}
    =
    C_{k,k+1}+a\Bignorm{h}_{B^{-1}}^2 .
  \]
  Substitution into $\mathcal V_k-\mathcal V_{k+1}$ gives
  \[
    \mathcal V_k-\mathcal V_{k+1}
    =
    2aC_{\star,k+1}
    +2AC_{k,k+1}
    +A^2\Bigp{\Bignorm{g_k}_{B^{-1}}^2-\Bignorm{g_{k+1}}_{B^{-1}}^2}.
  \]
  The adjacent gaps satisfy
  \[
    C_{k,k+1}+C_{k+1,k}
    =
    \inner{g_k-g_{k+1}}{x_k-x_{k+1}}
    =
    a\inner{g_k-g_{k+1}}{B^{-1}g_{k+1}},
  \]
  so
  \[
    \Bignorm{g_k}_{B^{-1}}^2-\Bignorm{g_{k+1}}_{B^{-1}}^2
    =
    \frac{2}{a}\Bigp{C_{k,k+1}+C_{k+1,k}}
    +\Bignorm{g_k-g_{k+1}}_{B^{-1}}^2 .
  \]
  Therefore
  \[
    \begin{aligned}
      \mathcal V_k-\mathcal V_{k+1}
      ={}&
      2aC_{\star,k+1}
      +\Bigp{2A+\frac{2A^2}{a}}C_{k,k+1}
      +\frac{2A^2}{a}C_{k+1,k} \\
      &\quad
      +A^2\Bignorm{g_k-g_{k+1}}_{B^{-1}}^2
      \geq0.
    \end{aligned}
  \]
  Consequently
  $\mathcal V_N\leq\mathcal V_1\leq\Bignorm{x_0-x_\star}_B^2\leq\Delta_0^2$. Since
  $F\Bigp{x_N}-F_\star\geq0$, this implies
  \[
    A_N^2\Bignorm{g_N}_{B^{-1}}^2
    \leq
    \mathcal V_N
    \leq
    \Delta_0^2,
  \]
  and hence $\Bignorm{g_N}_{B^{-1}}\leq\Delta_0/A_N$.
  Finally, convexity gives
  $F\Bigp{x_N}-F_\star\leq\inner{g_N}{x_N-x_\star}$, and Young's
  inequality in the
  $B/B^{-1}$ pair yields
  \[
    2A_N\Bigp{F\Bigp{x_N}-F_\star}
    \leq
    \Bignorm{x_N-x_\star}_B^2
    +A_N^2\Bignorm{g_N}_{B^{-1}}^2 .
  \]
  Adding another copy of $2A_N\Bigp{F\Bigp{x_N}-F_\star}$ to both sides gives
  $4A_N\Bigp{F\Bigp{x_N}-F_\star}\leq\mathcal V_N\leq\Delta_0^2$, proving the value
  bound.
\end{proof}

\subsection{Accelerated proximal point saddle-gap certificate}
\label{app:appm-saddle-gap-proof}

\appmsaddlegapthm*

We first prove an operator estimate. Let $\HH$ be a real Hilbert space, let
$\mathcal M:\HH\rightrightarrows\HH$ be maximally monotone, let
$x_\star\in\operatorname{zer}\mathcal M$, and fix $\beta>0$. Starting from
$x_0=y_0=y_{-1}$, define
\[
  x_{k+1}=\Bigp{\operatorname{Id}+\beta\mathcal M}^{-1}\Bigp{y_k},
  \qquad
  y_{k+1}=x_{k+1}
  +\frac{k}{k+2}\Bigp{x_{k+1}-x_k}
  -\frac{k}{k+2}\Bigp{x_k-y_{k-1}}.
\]
For $k\geq1$, set
\[
  r_k\triangleq y_{k-1}-x_k,
  \qquad
  q_k\triangleq\beta^{-1}r_k .
\]
The resolvent equation gives $q_k\in\mathcal M x_k$.

\begin{lemma}[APPM operator estimate]
  \label{lem:appm-operator-estimate}
  For every $N\geq1$,
  \[
    \inner{x_N-x_\star}{y_{N-1}-x_N}
    \leq
    \frac{\Bignorm{y_0-x_\star}^2}{4N}.
  \]
\end{lemma}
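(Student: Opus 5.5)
The plan mirrors the proof of \Cref{thm:ppa-residual-value}. We build a Lyapunov potential $\mathcal W_k$ in the variables $x_k$, $r_k=y_{k-1}-x_k$, the anchor $y_0$ and $x_\star$. We show that $\mathcal W_k$ is nonincreasing, using only the monotonicity inequalities of $\mathcal M$ sampled at the iterates and at $x_\star$:
\[
  M_{i,j}\triangleq\inner{q_i-q_j}{x_i-x_j}\geq0,
  \qquad
  M_{i,\star}\triangleq\inner{q_i}{x_i-x_\star}\geq0 .
\]
We then read off the bound at $k=N$ with a final completion of squares. This follows the candidate-lemma structure of \Cref{sec:automatic}: one one-step decrement lemma plus an initial and a terminal inequality.

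First, rewrite APPM in Halpern form. A short induction on the recurrence should give $y_k=\frac{1}{k+2}y_0+\frac{k+1}{k+2}\bigl(2x_{k+1}-y_k\bigr)$, i.e.\ an anchored reflected-resolvent step. This eliminates the momentum terms, so every quantity is affine in $(y_0,x_k,r_k)$. The candidate potential, of the Halpern type, is
\[
  \mathcal W_k\triangleq k^2\Bignorm{r_k}^2+2k\inner{r_k}{x_k-y_0}
  +c_k\inner{r_k}{x_k-x_\star},
\]
with $c_k\geq0$ to be fitted. In the spirit of \Cref{app:candidate-lemma-sdp-formulation}, I would let the candidate-lemma SDP identify $c_k$ and the multipliers numerically for small $N$, then guess closed forms.

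The key step is the decrement identity
\[
  \mathcal W_k-\mathcal W_{k+1}
  =\sigma_k\,\beta^{-1}\!\cdot\!\beta M_{k,k+1}
  +\tau_k M_{k+1,\star}
  +\Bignorm{\cdots}^2,
\]
with $\sigma_k,\tau_k\geq0$ (expected of order $k(k+1)$ and $k$). The proof is algebraic: substitute $y_k$ from the Halpern form and $x_k-x_{k+1}=r_{k+1}+(x_k-y_k)$, then collect terms. The initialization is $\mathcal W_1=\Bignorm{r_1}^2+2\inner{r_1}{x_1-y_0}+\dots$. Since $y_0-x_1=r_1$, this equals $-\Bignorm{r_1}^2+c_1\inner{r_1}{x_1-x_\star}$, and it should be bounded by a multiple of $\Bignorm{y_0-x_\star}^2$ using $M_{1,\star}\geq0$ and Young's inequality.

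For the terminal step, write $x_N-y_0=(x_N-x_\star)-(y_0-x_\star)$, so that
\[
  2N\inner{r_N}{x_N-x_\star}
  =\mathcal W_N-N^2\Bignorm{r_N}^2+2N\inner{r_N}{y_0-x_\star}
  -c_N\inner{r_N}{x_N-x_\star}.
\]
Young's inequality gives $2N\inner{r_N}{y_0-x_\star}\leq N^2\Bignorm{r_N}^2+\Bignorm{y_0-x_\star}^2$. Combined with $\mathcal W_N\leq\mathcal W_1$, this should yield $(2N+c_N)\inner{r_N}{x_N-x_\star}\leq C\Bignorm{y_0-x_\star}^2$. The constants must be tuned so that this becomes exactly $4N\inner{x_N-x_\star}{r_N}\leq\Bignorm{y_0-x_\star}^2$; for instance $c_k=2k$ with $\mathcal W_1\leq 0$ works out if the initial inequality closes as hoped.

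The main obstacle is the decrement identity: finding the right $c_k$ and a residual that is an exact nonnegative square. The pure Halpern potential alone only gives the $\Bignorm{r_N}^2\leq\Bignorm{y_0-x_\star}^2/N^2$ estimate. The inner product with $x_N-x_\star$ needs the extra $x_\star$-term with the sharp constant $1/4$. If a closed form is not evident, I would first solve the candidate-lemma SDP restricted to potentials spanned by these three Gram terms and fit rational functions of $k$ to the multipliers.
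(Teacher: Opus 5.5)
Your overall architecture (a potential that is nonincreasing by monotonicity, an initial value, a terminal completion of squares) is the same as the paper's. However, the potential you commit to cannot work for any choice of $c_k$.

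\textbf{The potential is never nonincreasing.} Take $\mathcal M$ to be the normal cone of $\{x_\star\}$, so that $(\operatorname{Id}+\beta\mathcal M)^{-1}\equiv x_\star$. Then $x_1=x_2=y_1=x_\star$, $r_1=y_0-x_\star$ and $r_2=0$. All $c_k$-terms vanish, and
\[
\mathcal W_1=\Bignorm{r_1}^2-2\Bignorm{r_1}^2=-\Bignorm{y_0-x_\star}^2<0=\mathcal W_2 .
\]
The underlying reason is a factor of $2$. One has $\mathcal W_k=2U_k-k^2\Bignorm{r_k}^2+c_k\inner{r_k}{x_k-x_\star}$, where $U_k\triangleq k^2\Bignorm{r_k}^2+k\inner{r_k}{x_k-y_0}$ is the correct potential. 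The extra term $-k^2\Bignorm{r_k}^2$ has no monotone behaviour. Your coefficients $(k^2,2k)$ are the Halpern ones for the reflected-resolvent residual $y_{k-1}-(2x_k-y_{k-1})=2r_k$, not for $r_k$ itself.

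\textbf{The initialization also fails.} With $c_1=2$ you get $\mathcal W_1=-\Bignorm{r_1}^2+2\inner{r_1}{x_1-x_\star}$. This is positive, e.g., for $\mathcal M=\epsilon\operatorname{Id}$ with $\beta\epsilon<2$. The inequality $M_{1,\star}\geq0$ points the wrong way for an upper bound. Young only gives $\mathcal W_1\leq\frac13\Bignorm{y_0-x_\star}^2$, which spoils the constant $1/4$.

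\textbf{How to fix it.} Use $U_k$ with no $x_\star$-term. From the Halpern form (whose left-hand side should read $y_{k+1}$, not $y_k$) one has
\[
x_{k+1}-y_0=\tfrac{k}{k+1}\Bigp{x_k-y_0-r_k}-r_{k+1}.
\]
A direct expansion then gives the exact identity
\[
U_k-U_{k+1}=k(k+1)\inner{x_k-x_{k+1}}{r_k-r_{k+1}}\geq0 .
\]
This uses only monotonicity between consecutive iterates: no $M_{k+1,\star}$ term and no residual square. Also $U_1=0$ because $x_1-y_0=-r_1$.

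The terminal step is then
\[
N\inner{r_N}{x_N-x_\star}=U_N-N^2\Bignorm{r_N}^2+N\inner{r_N}{y_0-x_\star}\leq\tfrac14\Bignorm{y_0-x_\star}^2 ,
\]
by Young's inequality. This is exactly the paper's proof: its $\mathcal V_k=k\inner{d_k}{r_k}+\Bignorm{kr_k-\frac12 z}^2$ equals $U_k+\frac14\Bignorm{z}^2$.

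In particular, your diagnosis that a Halpern-type potential only yields the $\Bignorm{r_N}$ bound is mistaken. With the right coefficients, $x_\star$ enters only through the final Young step, so the $c_k$-term is unnecessary. If you run your SDP fit, free the coefficient on $\inner{r_k}{x_k-y_0}$ rather than fixing it at $2k$.
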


\begin{proof}
  The recurrence implies, by induction, the trajectory identities
  \[
    y_k
    =
    y_0-\frac{2}{k+1}\sum_{\ell=1}^{k}\ell r_\ell,
    \qquad
    x_k
    =
    y_0-\frac{2}{k}\sum_{\ell=1}^{k-1}\ell r_\ell-r_k,
    \qquad k\geq1.
  \]
  Set $z=y_0-x_\star$, $A_k=\sum_{\ell=1}^{k}\ell r_\ell$ with $A_0=0$, and
  $d_k=x_k-x_\star$. Define
  \[
    \mathcal V_k
    \triangleq
    k\inner{d_k}{r_k}
    +
    \Bignorm{k r_k-\frac12z}^2 .
  \]
  Using the trajectory formula, $d_k=z-\frac{2}{k}A_{k-1}-r_k$, and hence
  \[
    \mathcal V_k
    =
    \frac14\Bignorm{z}^2
    -2\inner{A_{k-1}}{r_k}
    +k(k-1)\Bignorm{r_k}^2 .
  \]
  A direct subtraction, using $A_{k-1}=A_{k-2}+(k-1)r_{k-1}$ and the same
  trajectory formula for $d_{k-1}-d_k$, gives, for $k\geq2$,
  \[
    k(k-1)\Bigp{d_{k-1}-d_k}
    =
    k(k-1)\Bigp{r_{k-1}+r_k}-2A_{k-1}.
  \]
  Taking the inner product with $r_{k-1}-r_k$ gives
  \[
    k(k-1)\inner{d_{k-1}-d_k}{r_{k-1}-r_k}
    =
    F_k-F_{k-1},
    \qquad
    F_k\triangleq2\inner{A_{k-1}}{r_k}-k(k-1)\Bignorm{r_k}^2 .
  \]
  Since $\mathcal V_k=\frac14\Bignorm{z}^2-F_k$, we obtain
  \[
    \mathcal V_k-\mathcal V_{k-1}
    =
    -k(k-1)\inner{x_{k-1}-x_k}{r_{k-1}-r_k}.
  \]
  Since $r_\ell=\beta q_\ell$ with $q_\ell\in\mathcal M x_\ell$,
  \[
    \inner{x_{k-1}-x_k}{r_{k-1}-r_k}
    =
    \beta\inner{x_{k-1}-x_k}{q_{k-1}-q_k}
    \geq0
  \]
  by monotonicity. Thus $\mathcal V_k\leq\mathcal V_{k-1}$. At $k=1$,
  $x_1=y_0-r_1$, so
  \[
    \mathcal V_1
    =
    \inner{y_0-x_\star-r_1}{r_1}
    +
    \Bignorm{r_1-\frac12\Bigp{y_0-x_\star}}^2
    =
    \frac14\Bignorm{y_0-x_\star}^2 .
  \]
  Therefore $\mathcal V_N\leq\Bignorm{y_0-x_\star}^2/4$. Dropping the
  square term
  in $\mathcal V_N$ gives
  \[
    N\inner{x_N-x_\star}{r_N}
    \leq
    \mathcal V_N
    \leq
    \frac14\Bignorm{y_0-x_\star}^2 .
  \]
  Since $r_N=y_{N-1}-x_N$, the claim follows.
\end{proof}

\begin{proof}[Proof of \Cref{thm:appm-saddle-gap}]
  Apply \Cref{lem:appm-operator-estimate} on the product Hilbert space
  $\HH_1\times\HH_2$ with $x_\star=\Bigp{u_\star,v_\star}$. Write
  $q_N=\Bigp{p_N,s_N}\in\mathcal M\Bigp{u_N,v_N}$. Then
  $p_N\in\partial\Phi\Bigp{\cdot,v_N}\Bigp{u_N}$ and
  $s_N\in\partial\Bigp{-\Phi\Bigp{u_N,\cdot}}\Bigp{v_N}$. Convexity
  in $u$ and convexity of
  $-\Phi\Bigp{u_N,\cdot}$ give
  \[
    \Phi\Bigp{u_N,v_N}-\Phi\Bigp{u_\star,v_N}
    \leq
    \inner{p_N}{u_N-u_\star},
    \qquad
    \Phi\Bigp{u_N,v_\star}-\Phi\Bigp{u_N,v_N}
    \leq
    \inner{s_N}{v_N-v_\star}.
  \]
  Adding and using $r_N=\beta q_N$ yields
  \[
    \Phi\Bigp{u_N,v_\star}-\Phi\Bigp{u_\star,v_N}
    \leq
    \inner{x_N-x_\star}{q_N}
    =
    \beta^{-1}\inner{x_N-x_\star}{r_N}.
  \]
  The operator estimate gives
  \[
    \Phi\Bigp{u_N,v_\star}-\Phi\Bigp{u_\star,v_N}
    \leq
    \frac{\Bignorm{y_0-x_\star}^2}{4\beta N}
    =
    \frac{\Bignorm{u_0-u_\star}^2+\Bignorm{v_0-v_\star}^2}{4\beta N},
  \]
  as claimed.
\end{proof}

\section{FGM active multiplier patterns}\label{app:fgm-multiplier-support}

For the FGM sparsity comparison, the conjectured $2N$ active-multiplier pattern
for the interpolation constraints associated with the function class $\F_L$ is
\[
  \mathcal S_{2N}
  \triangleq
  \Bigset{x_\star\to y_k:\ k\in\llbracket 0,N-1 \rrbracket}
  \cup
  \Bigset{y_k\to y_{k+1}:\ k\in\llbracket 0,N-2 \rrbracket}
  \cup
  \Bigset{y_{N-1}\to x_N}.
\]
Here $u\to v$ denotes the $\F_L$ interpolation constraint with source sample $u$
and target sample $v$. In the FGM label convention of \Cref{app:pep-fgm-cvx},
these three blocks correspond respectively to the active Lagrangian
dual multipliers
$\lambda_{\star\to k}$, $\lambda_{k\to k+1}$, and
$\lambda_{N-1\to N}$. Thus the middle block pairs consecutive momentum points
$y_k$ and $y_{k+1}$, while the only edge involving the final endpoint is the
terminal pair $y_{N-1}\to x_N$. This pattern has size
$N+\Bigp{N-1}+1=2N$.
For instance, when $N=3$ the pattern is
\[
  \Bigset{x_\star\to y_0,\ x_\star\to y_1,\ x_\star\to y_2,
  y_0\to y_1,\ y_1\to y_2,\ y_2\to x_3}.
\]

The following spy-style table records the active interpolation-multiplier
patterns used in the $N=3$ FGM sparsity comparison. Rows are source points and
columns are target points: a bullet in row $u$ and column $v$ means that the
corresponding multiplier $\lambda_{u\to v}$ is above the numerical active-set
threshold, while an empty cell means that it is inactive. The final panel is the
conjectured $2N$ pattern above, which matches the exhaustive active pattern for
this instance. The table is generated from the reduced FGM formulation
equivalent to \eqref{eq:fgm-cvx-d-sdp-pep}.

\input{tables/fgm_multiplier_support_tables.tex}

%% file: tables/fgm_multiplier_support_tables.tex
\begin{table}[H]
\centering
\normalsize
\renewcommand{\arraystretch}{1.15}
\setlength{\tabcolsep}{4.0pt}
\caption{Active interpolation-multiplier patterns for the FGM $N=3$ example. Rows are sources and columns are targets; a bullet marks $\lambda_{i\to j}>10^{-7}$ and dashes mark self-pairs.}
\label{tab:fgm-active-multiplier-patterns}
\noindent\makebox[\textwidth][c]{%
\begin{tabular}[t]{@{}lccccc@{}}
\multicolumn{6}{c}{Raw (total 16)} \\
\toprule
 & $x_\star$ & $y_{0}$ & $y_{1}$ & $y_{2}$ & $x_{3}$ \\
\midrule
$x_\star$ & -- & $\bullet$ & $\bullet$ & $\bullet$ & $\bullet$ \\
$y_{0}$ &  & -- & $\bullet$ & $\bullet$ & $\bullet$ \\
$y_{1}$ &  & $\bullet$ & -- & $\bullet$ & $\bullet$ \\
$y_{2}$ &  & $\bullet$ & $\bullet$ & -- & $\bullet$ \\
$x_{3}$ &  & $\bullet$ & $\bullet$ & $\bullet$ & -- \\
\bottomrule
\end{tabular}\hspace{0.6em}\begin{tabular}[t]{@{}lccccc@{}}
\multicolumn{6}{c}{Plain $\ell_1$ (total 8)} \\
\toprule
 & $x_\star$ & $y_{0}$ & $y_{1}$ & $y_{2}$ & $x_{3}$ \\
\midrule
$x_\star$ & -- & $\bullet$ & $\bullet$ & $\bullet$ & $\bullet$ \\
$y_{0}$ &  & -- &  & $\bullet$ & $\bullet$ \\
$y_{1}$ &  &  & -- &  & $\bullet$ \\
$y_{2}$ &  &  &  & -- & $\bullet$ \\
$x_{3}$ &  &  &  &  & -- \\
\bottomrule
\end{tabular}
}
\par\vspace{1.6em}
\noindent\makebox[\textwidth][c]{%
\begin{tabular}[t]{@{}lccccc@{}}
\multicolumn{6}{c}{Log-sum (total 8)} \\
\toprule
 & $x_\star$ & $y_{0}$ & $y_{1}$ & $y_{2}$ & $x_{3}$ \\
\midrule
$x_\star$ & -- & $\bullet$ & $\bullet$ & $\bullet$ & $\bullet$ \\
$y_{0}$ &  & -- &  & $\bullet$ & $\bullet$ \\
$y_{1}$ &  &  & -- &  & $\bullet$ \\
$y_{2}$ &  &  &  & -- & $\bullet$ \\
$x_{3}$ &  &  &  &  & -- \\
\bottomrule
\end{tabular}\hspace{0.6em}\begin{tabular}[t]{@{}lccccc@{}}
\multicolumn{6}{c}{Norm. log-sum (total 7)} \\
\toprule
 & $x_\star$ & $y_{0}$ & $y_{1}$ & $y_{2}$ & $x_{3}$ \\
\midrule
$x_\star$ & -- & $\bullet$ & $\bullet$ & $\bullet$ & $\bullet$ \\
$y_{0}$ &  & -- & $\bullet$ &  &  \\
$y_{1}$ &  &  & -- & $\bullet$ &  \\
$y_{2}$ &  &  &  & -- & $\bullet$ \\
$x_{3}$ &  &  &  &  & -- \\
\bottomrule
\end{tabular}
}
\par\vspace{1.6em}
\noindent\makebox[\textwidth][c]{%
\begin{tabular}[t]{@{}lccccc@{}}
\multicolumn{6}{c}{Capped $\ell_1$ (total 7)} \\
\toprule
 & $x_\star$ & $y_{0}$ & $y_{1}$ & $y_{2}$ & $x_{3}$ \\
\midrule
$x_\star$ & -- & $\bullet$ & $\bullet$ & $\bullet$ & $\bullet$ \\
$y_{0}$ &  & -- & $\bullet$ &  &  \\
$y_{1}$ &  &  & -- & $\bullet$ &  \\
$y_{2}$ &  &  &  & -- & $\bullet$ \\
$x_{3}$ &  &  &  &  & -- \\
\bottomrule
\end{tabular}\hspace{0.6em}\begin{tabular}[t]{@{}lccccc@{}}
\multicolumn{6}{c}{Conjecture (total 6)} \\
\toprule
 & $x_\star$ & $y_{0}$ & $y_{1}$ & $y_{2}$ & $x_{3}$ \\
\midrule
$x_\star$ & -- & $\bullet$ & $\bullet$ & $\bullet$ &  \\
$y_{0}$ &  & -- & $\bullet$ &  &  \\
$y_{1}$ &  &  & -- & $\bullet$ &  \\
$y_{2}$ &  &  &  & -- & $\bullet$ \\
$x_{3}$ &  &  &  &  & -- \\
\bottomrule
\end{tabular}
}
\end{table}

%% file: bib.bib
@phdthesis{taylorconvex,
  author = {Taylor, Adrien},
  title = {Convex interpolation and performance estimation of first-order methods for convex optimization},
  school = {Universit{\'e} {C}atholique de Louvain},
  year = {2017},
  url = {https://hdl.handle.net/2078.5/67094},
}

@inproceedings{goujaud2023fundamental,
  author = {Goujaud, Baptiste and Dieuleveut, Aymeric and Taylor, Adrien},
  title = {On fundamental proof structures in first-order optimization},
  booktitle = {2023 62nd {IEEE} Conference on Decision and Control ({CDC})},
  organization = {IEEE},
  pages = {3023--3030},
  year = {2023},
  doi = {10.1109/CDC49753.2023.10383282},
}

@article{drori2014performance,
  author = {Drori, Yoel and Teboulle, Marc},
  title = {Performance of first-order methods for smooth convex minimization: {A} novel approach},
  journal = {Mathematical Programming},
  volume = {145},
  number = {1},
  pages = {451--482},
  publisher = {Springer},
  year = {2014},
  doi = {10.1007/s10107-013-0653-0},
}

@article{taylor2017exact,
  author = {Taylor, Adrien B. and Hendrickx, Julien M. and Glineur, Fran{\c{c}}ois},
  title = {Exact worst-case performance of first-order methods for composite convex optimization},
  journal = {{SIAM} Journal on Optimization},
  volume = {27},
  number = {3},
  pages = {1283--1313},
  year = {2017},
  doi = {10.1137/16M108104X},
}

@article{taylor2017smooth,
  title     = {Smooth strongly convex interpolation and exact worst-case performance of first-order methods},
  author    = {Taylor, Adrien B. and Hendrickx, Julien M. and Glineur, Fran{\c{c}}ois},
  year      = 2017,
  journal   = {Mathematical Programming},
  publisher = {Springer},
  volume    = 161,
  number    = {1-2},
  pages     = {307--345},
  doi       = {10.1007/s10107-016-1009-3},
}

@article{taylor2018proximal,
  author = {Taylor, Adrien B. and Hendrickx, Julien M. and Glineur, Fran{\c{c}}ois},
  title = {Exact worst-case convergence rates of the proximal gradient method for composite convex minimization},
  journal = {Journal of Optimization Theory and Applications},
  volume = {178},
  number = {2},
  pages = {455--476},
  year = {2018},
  doi = {10.1007/s10957-018-1298-1},
}

@article{uschmajew2022note,
  author = {Uschmajew, Andr{\'e} and Vandereycken, Bart},
  title = {A Note on the Optimal Convergence Rate of Descent Methods with Fixed Step Sizes for Smooth Strongly Convex Functions},
  journal = {Journal of Optimization Theory and Applications},
  volume = {194},
  number = {1},
  pages = {364--373},
  year = {2022},
  doi = {10.1007/s10957-022-02032-z},
}

@inproceedings{fazel2003log,
  author = {Fazel, Maryam and Hindi, Haitham and Boyd, Stephen P.},
  title = {Log-det heuristic for matrix rank minimization with applications to {Hankel} and {Euclidean} distance matrices},
  booktitle = {Proceedings of the 2003 American Control Conference ({ACC}), 2003},
  organization = {IEEE},
  volume = {3},
  pages = {2156--2162},
  year = {2003},
  doi = {10.1109/ACC.2003.1243393},
}

@article{natarajan1995sparse,
  author = {Natarajan, B. K.},
  title = {Sparse approximate solutions to linear systems},
  journal = {{SIAM} Journal on Computing},
  volume = {24},
  number = {2},
  pages = {227--234},
  year = {1995},
  doi = {10.1137/S0097539792240406},
}

@article{bertsimas2016best,
  author = {Bertsimas, Dimitris and King, Angela and Mazumder, Rahul},
  title = {Best subset selection via a modern optimization lens},
  journal = {The Annals of Statistics},
  volume = {44},
  number = {2},
  pages = {813--852},
  year = {2016},
  doi = {10.1214/15-AOS1388},
}

@article{amaldi1998approximability,
  author = {Amaldi, Edoardo and Kann, Viggo},
  title = {On the approximability of minimizing nonzero variables or unsatisfied relations in linear systems},
  journal = {Theoretical Computer Science},
  volume = {209},
  number = {1--2},
  pages = {237--260},
  year = {1998},
  doi = {10.1016/S0304-3975(97)00115-1},
}

@article{tibshirani1996regression,
  author = {Tibshirani, Robert},
  title = {Regression shrinkage and selection via the lasso},
  journal = {Journal of the Royal Statistical Society: Series B (Methodological)},
  volume = {58},
  number = {1},
  pages = {267--288},
  year = {1996},
  doi = {10.1111/j.2517-6161.1996.tb02080.x},
}

@article{candes2008enhancing,
  author = {Cand{\`e}s, Emmanuel J. and Wakin, Michael B. and Boyd, Stephen P.},
  title = {Enhancing sparsity by reweighted {$\ell_1$} minimization},
  journal = {Journal of Fourier Analysis and Applications},
  volume = {14},
  number = {5},
  pages = {877--905},
  year = {2008},
  doi = {10.1007/s00041-008-9045-x},
}

@article{zhang2010analysis,
  author = {Zhang, Tong},
  title = {Analysis of multi-stage convex relaxation for sparse regularization},
  journal = {Journal of Machine Learning Research},
  volume = {11},
  number = {35},
  pages = {1081--1107},
  year = {2010},
  url = {https://jmlr.csail.mit.edu/papers/v11/zhang10a.html},
}

@misc{upadhyaya2025autolyap,
  author = {Upadhyaya, Manu and Das Gupta, Shuvomoy and Taylor, Adrien B. and Banert, Sebastian and Giselsson, Pontus},
  title = {The {AutoLyap} software suite for computer-assisted {Lyapunov} analyses of first-order methods},
  year = {2025},
  note = {\href{https://arxiv.org/abs/2506.24076}{arXiv:2506.24076 [math.OC]}},
}

@article{upadhyaya2024automated,
  author = {Upadhyaya, Manu and Banert, Sebastian and Taylor, Adrien B. and Giselsson, Pontus},
  title = {Automated tight {Lyapunov} analysis for first-order methods},
  journal = {Mathematical Programming},
  volume = {209},
  number = {1--2},
  pages = {133--170},
  year = {2024},
  doi = {10.1007/s10107-024-02061-8},
}

@misc{upadhyaya2026optimal,
  author = {Upadhyaya, Manu and Berg Thomsen, Daniel and Dieuleveut, Aymeric and Taylor, Adrien B.},
  title = {An optimal first-order method for smooth and strongly convex composite optimization and its stationary limit},
  year = {2026},
  note = {\href{https://arxiv.org/abs/2605.22929}{arXiv:2605.22929 [math.OC]}},
}

@inproceedings{taylor2017performance,
  title={Performance estimation toolbox ({PESTO}): {A}utomated worst-case analysis of first-order optimization methods},
  author={Taylor, Adrien B and Hendrickx, Julien M and Glineur, Fran{\c{c}}ois},
  booktitle={2017 IEEE 56th Annual Conference on Decision and Control (CDC)},
  pages={1278--1283},
  year={2017},
  organization={IEEE},
  doi={10.1109/CDC.2017.8263832},
}

@article{goujaud2024pepit,
  author = {Goujaud, Baptiste and Moucer, C{\'e}line and Glineur, Fran{\c{c}}ois and Hendrickx, Julien M and Taylor, Adrien B and Dieuleveut, Aymeric},
  title = {{PEPit}: {C}omputer-assisted worst-case analyses of first-order optimization methods in {Python}},
  journal = {Mathematical Programming Computation},
  volume = {16},
  number = {3},
  pages = {337--367},
  publisher = {Springer},
  year = {2024},
  doi = {10.1007/s12532-024-00259-7},
}

@article{kim2021accelerated,
  author = {Kim, Donghwan},
  title = {Accelerated proximal point method for maximally monotone operators},
  journal = {Mathematical Programming},
  volume = {190},
  pages = {57--87},
  year = {2021},
  doi = {10.1007/s10107-021-01643-0},
}

@article{guyang2023tight,
  author = {Guoyong Gu and Junfeng Yang},
  title = {Tight Convergence Rate in Subgradient Norm of the Proximal Point Algorithm},
  journal = {Optimization},
  year = {2025},
  doi = {10.1080/02331934.2025.2602877},
}

@article{ryu2020operator,
  title={Operator splitting performance estimation: {T}ight contraction factors and optimal parameter selection},
  author={Ryu, Ernest K and Taylor, Adrien B and Bergeling, Carolina and Giselsson, Pontus},
  journal={SIAM Journal on Optimization},
  volume={30},
  number={3},
  pages={2251--2271},
  year={2020},
  publisher={SIAM},
  doi={10.1137/19M1304854},
}

@article{dragomir2022optimal,
  title={Optimal complexity and certification of {Bregman} first-order methods},
  author={Dragomir, Radu-Alexandru and Taylor, Adrien B and d'Aspremont, Alexandre and Bolte, J{\'e}r{\^o}me},
  journal={Mathematical Programming},
  volume={194},
  number={1},
  pages={41--83},
  year={2022},
  publisher={Springer},
  doi={10.1007/s10107-021-01618-1},
}

@inproceedings{taylor2019stochastic,
  title={Stochastic first-order methods: {N}on-asymptotic and computer-aided analyses via potential functions},
  author={Taylor, Adrien and Bach, Francis},
  booktitle={Conference on Learning Theory},
  pages={2934--2992},
  year={2019},
  organization={PMLR},
  url={https://proceedings.mlr.press/v99/taylor19a.html},
}

@article{de2017worst,
  title={On the worst-case complexity of the gradient method with exact line search for smooth strongly convex functions},
  author={{de Klerk}, Etienne and Glineur, Fran{\c{c}}ois and Taylor, Adrien B},
  journal={Optimization Letters},
  volume={11},
  number={7},
  pages={1185--1199},
  year={2017},
  publisher={Springer},
  doi={10.1007/s11590-016-1087-4},
}

@article{drori2020efficient,
  title={Efficient First-order Methods for Convex Minimization: {A} constructive approach},
  author={Drori, Yoel and Taylor, Adrien},
  journal={Mathematical Programming, Series A},
  volume={184},
  pages={183--220},
  year={2020},
  doi={10.1007/s10107-019-01410-2},
}

@article{gorbunov2022last,
  title={Last-iterate convergence of optimistic gradient method for monotone variational inequalities},
  author={Gorbunov, Eduard and Taylor, Adrien and Gidel, Gauthier},
  journal={Advances in Neural Information Processing Systems},
  volume={35},
  pages={21858--21870},
  year={2022},
  doi={10.52202/068431-1589},
}

@misc{barre2020principled,
  title={Principled analyses and design of first-order methods with inexact proximal operators},
  author={Barr{\'e}, Mathieu and Taylor, Adrien and Bach, Francis},
  year={2020},
  note={\href{https://arxiv.org/abs/2006.06041}{arXiv:2006.06041 [math.OC]}},
}

@misc{goujaud2022optimal,
  title={Optimal first-order methods for convex functions with a quadratic upper bound},
  author={Goujaud, Baptiste and Taylor, Adrien and Dieuleveut, Aymeric},
  year={2022},
  note={\href{https://arxiv.org/abs/2205.15033}{arXiv:2205.15033 [math.OC]}},
}

@article{taylor2023optimal,
  title={An optimal gradient method for smooth strongly convex minimization},
  author={Taylor, Adrien and Drori, Yoel},
  journal={Mathematical Programming},
  volume={199},
  number={1},
  pages={557--594},
  year={2023},
  publisher={Springer},
  doi={10.1007/s10107-022-01839-y},
}

@article{bergthomsen2025tight,
  title={Tight analyses of first-order methods with error feedback},
  author={Berg Thomsen, Daniel and Taylor, Adrien and Dieuleveut, Aymeric},
  journal={Advances in Neural Information Processing Systems (NeurIPS)},
  year={2025},
  url={https://papers.neurips.cc/paper_files/paper/2025/hash/ea9f05e43ff75fc8d70ba9ff1d18cfa5-Abstract-Conference.html},
}

@article{kim2016optimized,
  title={Optimized first-order methods for smooth convex minimization},
  author={Kim, Donghwan and Fessler, Jeffrey A},
  journal={Mathematical Programming},
  volume={159},
  number={1},
  pages={81--107},
  year={2016},
  publisher={Springer},
  doi={10.1007/s10107-015-0949-3},
}

@article{kim2018generalizing,
  title={Generalizing the optimized gradient method for smooth convex minimization},
  author={Kim, Donghwan and Fessler, Jeffrey A},
  journal={SIAM Journal on Optimization},
  volume={28},
  number={2},
  pages={1920--1950},
  year={2018},
  publisher={SIAM},
  doi={10.1137/17M112124X},
}

@article{kim2018another,
  title={Another look at the fast iterative shrinkage/thresholding algorithm ({FISTA})},
  author={Kim, Donghwan and Fessler, Jeffrey A},
  journal={SIAM Journal on Optimization},
  volume={28},
  number={1},
  pages={223--250},
  year={2018},
  publisher={SIAM},
  doi={10.1137/16M108940X},
}

@article{lieder2021convergence,
  title={On the convergence rate of the {H}alpern-iteration},
  author={Lieder, Felix},
  journal={Optimization Letters},
  volume={15},
  number={2},
  pages={405--418},
  year={2021},
  publisher={Springer},
  doi={10.1007/s11590-020-01617-9},
}

@article{altschuler2025acceleration,
  title={Acceleration by stepsize hedging: {M}ulti-step descent and the silver stepsize schedule},
  author={Altschuler, Jason M and Parrilo, Pablo A},
  journal={Journal of the ACM},
  volume={72},
  number={2},
  pages={1--38},
  year={2025},
  publisher={ACM New York, NY},
  doi={10.1145/3708502},
}

@misc{shugart2025negative,
  title={Negative Stepsizes Make Gradient-Descent-Ascent Converge},
  author={Shugart, Henry and Altschuler, Jason M.},
  year={2025},
  note={\href{https://arxiv.org/abs/2505.01423}{arXiv:2505.01423 [math.OC]}},
}

@inproceedings{park2022exact,
  title={Exact Optimal Accelerated Complexity for Fixed-Point Iterations},
  author={Park, Jisun and Ryu, Ernest K.},
  booktitle={Proceedings of the 39th International Conference on Machine Learning},
  pages={17420--17457},
  year={2022},
  volume={162},
  series={Proceedings of Machine Learning Research},
  publisher={PMLR},
  url={https://proceedings.mlr.press/v162/park22c.html},
}

@inproceedings{yoon2024optimal,
  title={Optimal Acceleration for Minimax and Fixed-Point Problems is Not Unique},
  author={Yoon, Taeho and Kim, Jaeyeon and Suh, Jaewook J. and Ryu, Ernest K.},
  booktitle={Proceedings of the 41st International Conference on Machine Learning},
  pages={57244--57314},
  year={2024},
  volume={235},
  series={Proceedings of Machine Learning Research},
  publisher={PMLR},
  url={https://proceedings.mlr.press/v235/yoon24b.html},
}

@misc{rotaru2024exact,
  title={Exact worst-case convergence rates of gradient descent: {A} complete analysis for all constant stepsizes over nonconvex and convex functions},
  author={Rotaru, Teodor and Glineur, Fran{\c{c}}ois and Patrinos, Panagiotis},
  year={2024},
  note={\href{https://arxiv.org/abs/2406.17506}{arXiv:2406.17506 [math.OC]}},
}

@phdthesis{rotaru2026thesis,
  title = {Exact Performance Analysis of Fundamental First-Order Optimization Methods},
  author = {Rotaru, Teodor},
  school = {KU Leuven},
  year = {2026},
  url = {https://lirias.kuleuven.be/handle/20.500.12942/781389},
}

@inproceedings{naldi2025solving,
  title={Solving generic parametric linear matrix inequalities},
  author={Naldi, Simone and Safey El Din, Mohab and Taylor, Adrien and Wang, Weijia},
  booktitle={Proceedings of the 2025 International Symposium on Symbolic and Algebraic Computation},
  pages={267--276},
  year={2025},
  doi={10.1145/3747199.3747570},
}

@article{abbaszadehpeivasti2024rate,
  title={On the rate of convergence of the difference-of-convex algorithm ({DCA})},
  author={Abbaszadehpeivasti, Hadi and {de Klerk}, Etienne and Zamani, Moslem},
  journal={Journal of Optimization Theory and Applications},
  volume={202},
  number={1},
  pages={475--496},
  year={2024},
  publisher={Springer},
  doi={10.1007/s10957-023-02199-z},
}

@article{jang2025computer,
  title={Computer-assisted design of accelerated composite optimization methods: {OptISTA}},
  author={Jang, Uijeong and Das Gupta, Shuvomoy and Ryu, Ernest K},
  journal={Mathematical Programming},
  pages={1--109},
  year={2025},
  publisher={Springer},
  doi={10.1007/s10107-025-02258-5},
}
